\numberwithin{equation}{section}
\numberwithin{equation}{section}
\newtheorem{theo}{Theorem}[section] 
\newtheorem{lem}[theo]{Lemma}
\newtheorem{corll}[theo]{Corollary}
\newtheorem{rem}[theo]{Remark}
\newtheorem{clm}[theo]{Claim}
\newtheorem{prop}[theo]{Proposition}
\newcommand{\R}{\mathbb{R}}
\newcommand{\C}{\mathbb{C}}
\newcommand{\N}{\mathbb{N}}
\DeclareMathOperator{\re}{Re}
\DeclareMathOperator{\im}{Im}
\def \tomega  {\tilde{\omega}}
\def \tmu  {\tilde{\mu}}
\def \TR   {\tilde{R}}
\def \tvarphi {\tilde{\varphi}}
\def \tQ  {\tilde{Q}}
\def \Modomega { \tilde{\omega}}
\def  \Modmu       {\tilde{\mu}}
\def \W { \widetilde{W}}
\def \NNls {{\rm{NLS$_\Omega$ }}}
\def \nnls {{\rm{NLS$_{\R^d}$ }}}
\title[Multi-solitary wave solution to  NLS outside a strictly convex obstacle]{
Construction of multi-solitary waves solution to the focusing nonlinear Schr\"odinger equation outside an obstacle in the $L^2$-subcritical case}
 \author[O. Landoulsi]{Oussama Landoulsi }
\address{Department of Mathematics \\ University of Massachusetts \\ Amherst, MA 01003, USA}
\email{olandoulsi@umass.edu}
\date{\today}
\subjclass[2010]{Primary 35Q55 ;  Secondary 35P25, 35B40, 58J32, 58J37}
\keywords{Focusing NLS equation, exterior domain, global existence, multi-soliton.}
\begin{document}

\begin{abstract}
We consider the focusing $L^2$-subcritical Schr\"odinger equation in the exterior of a smooth, compact, strictly convex obstacle $\Theta \subset \mathbb{R}^d$. We construct a solution that, for large times, behaves asymptotically as a finite sum of solitary waves on $\mathbb{R}^d$, each traveling with sufficiently large and distinct velocities, and satisfying Dirichlet boundary conditions. The construction is achieved via a compactness argument similar to that introduced by F.Merle in 1990 for constructing solutions of the NLS equation that blow up at several points, combined with modulation theory, the coercivity property of the linearized operator, and localized energy estimates. 

\end{abstract}
\maketitle
\hspace{2cm}

\tableofcontents

\section{Introduction}

We consider the focusing nonlinear Schr\"odinger equation in the exterior of a smooth compact strictly convex obstacle $\Theta \subset \R^d,$ for any $d \geq 1,$ with Dirichlet boundary conditions: 
\begin{equation}
\tag{NLS$_{\Omega}$}
\begin{cases}
\label{NLS} 		
i\partial_tu+\Delta_{\Omega} u = -|u|^{p-1}u  \qquad  &  (t,x)\in \R \times\Omega ,\\ 
u(t_0,x) =u_0(x)  &  x \in \Omega  , \\
u(t,x)=0  &   (t,x)\in \R \times \partial\Omega ,
\end{cases}		
\end{equation}		
     where $\Omega=\R^d \setminus \Theta$, $\Delta_{\Omega}$ is the Dirichlet Laplace operator on $\Omega$ and $t_0 \in \R$ is the initial time. Here, $u$ is a complex-valued function,
\begin{align*}
 u:  \R &\times  \Omega \longrightarrow \C \\
 	(t &, x)  \longmapsto  u(t,x) .
 \end{align*}
   We take the initial data $u_0 \in H^1_0(\Omega),$ where $H^1_0(\Omega)$ is the Sobolev space 
   $$ \{ u \in L^2(\Omega) \; \text{such that} \; \left| \nabla u \right| \in L^2(\Omega) \; \text{and} \; u_{|\partial \Omega}=0\}. $$
    
    The \NNls equation is locally well-posed in $H^1_0(\Omega),$ for $1<p<\frac{d+2}{d-2}$ see \cite{AnRa08,PlVe09,Ivanovici10,MR2683754,Ou19,BlairSmithSogge2012}. The solutions of the \NNls equation satisfy the mass and energy conservation laws: 
 \begin{align*}
 M_{_\Omega}[u(t)] &:= \int_{\Omega} |u(t,x)|^2 dx = M[u_0] ,\\
E_{\Omega}[u(t)] &:= \frac{1}{2} \int_{\Omega} |\nabla u(t,x)|^2 dx - \frac{1}{4} \int_{\Omega}\left|u(t,x)\right|^{4}dx=E[u_0].  
\end{align*}

The \nnls equation is invariant by the scaling transformation, that is, 
  \begin{equation*}
 u(t,x) \longmapsto \lambda^{\frac{2}{p-1}}u(\lambda x,\lambda^2 t) \;, \; \text{ for }  \lambda >0.
 \end{equation*}

 This scaling identifies the critical Sobolev space $\dot{H}^{s_c}_{x}$, where the critical regularity $s_c$ is given by $s_c:=\frac{d}{2} - \frac{2}{p-1}$. The case when $ s_c=0$ is referred to as mass-critical or $L^2$-critical and the case when $s_c=1$ is called energy-critical or $H^1$-critical.  \\

Throughout this paper, we will take $ 1<p<1+\frac{4}{d}$. Since the presence of an obstacle does not change the intrinsic dimensionality of the problem, we regard the nonlinear Schr\"odinger equation outside an obstacle as having the same criticality, and thus as being mass-subcritical. \\

Consider solitary waves solution of  (NLS$_{\Omega}$), with $\Omega=\R^d$, that is $u(t,x)=e^{i t \omega }Q_{\omega}(x)$ \\ 
where $ Q_\omega $ is a solution of the nonlinear elliptic equation: 
 \begin{equation}
 \label{eq_Q}
 \begin{cases}
 -\Delta Q_\omega + \omega \, Q_\omega= \left| Q_\omega\right|^{p-1} Q_\omega ,   \\
 \;  Q_\omega \in H^1(\R^d).
 \end{cases}
 \end{equation}

This elliptic equation admits solutions if and only if $\omega > 0$. In this paper, we will denote by $ Q_\omega $ the ground state which is the unique radial positive solution of \eqref{eq_Q}.  \\

We recall that $Q_{\omega}$ is smooth and exponentially decaying at infinity, i.e., $|\nabla Q_{\omega}|+|Q_{\omega}|\leq C e^{-\frac{\sqrt{\omega}}{2}|x|},$ and characterized as the unique minimizer for the Gagliardo-Nirenberg inequality up to scaling, space translation and phase shift, see \cite{Kwong89,BeLi83a,GidasNirenberg79}. \\

The \nnls equation posed on the whole Euclidean space $\R^d$ admits the following symmetries: If $u(t, x)$ is a solution, then
\begin{itemize}
    \item Space-translation invariance:  for any $x_0 \in \R^d,$ $u(t,x-x_0)$ is also a solution to the \nnls equation.
     \item Phase invariance:  for any $\mu \in \R,$ $u(t,x)e^{i \mu}$ is also a solution to the \nnls equation.
\end{itemize}
Moreover, the \nnls equation also enjoys Galilean invariance. If $u(t,x)$ is solution, then $u(t,x-vt) \, e^{i(\frac{x \cdot v}{2} -   \frac{ | v |^2   }{4} t ) } $ is also a solution, for $v \in \R^d$.\\

Applying a Galilean transform and the symmetries to the soliton solution 
$e^{i  t \omega } Q_{\omega}(x)$ of the \nnls equation, we obtain a solitary wave solution, moving on the line $x=x_0+t v$ with velocity $v \in \R^d:$ 
\begin{equation}
\label{soliton}
u(t,x) = e^{i(\frac{1}{2}(x \cdot v)-\frac{1}{4} \left| v\right|^2 t + t \, \omega +\mu)}Q_{\omega}(x-x_0-t \,v).
\end{equation} 

The soliton \eqref{soliton} is a global solution of the focusing nonlinear Schr\"odinger equation posed on the whole space, but it is not a solution of the \NNls equation. Our goal is to construct a solution to the \NNls equation satisfying Dirichlet boundary conditions and behaving asymptotically, as $t \to +\infty$, like a finite sum of solitary waves of the form \eqref{soliton}, each with sufficiently large and distinct velocities. \\

We now present the main results of this paper. \\

Assume $1<p<1+\frac 4d.$ let $K \in \N \backslash \{0 \}$ and for any $k\in \{ 1, \cdots,  K \}$, we define $\Psi_k$  to be a $C^{\infty}$ function such that:
 \begin{equation}
 \label{def-Psi}
 \begin{cases}
\Psi_k=0 & \; \text{near} \;\, \Theta , \\
\Psi_k=1 & \; if \;  \left|x\right| \gg 1 .
 \end{cases}
 \end{equation}

\begin{theo}
\label{theorem-pranp}
Let $K \in \N \backslash \{0 \}.$ For any $k\in \{ 1, \cdots,  K \},$ let $\Psi_k$ defined as \eqref{def-Psi}, and let $\omega_k >0, \; v_k \in \R^d, \; x_k^0 \in \R^d $ and $\mu_k \in \R. $ Assume that $\forall k \neq k^{'},  \;   v_{k} \neq  v_{k^{'}}. $ \\ 
Let $$R_k(t,x)= Q_{\omega_k}(x-x_k^0-t v_k) \Psi_k(x) e^{i ( \frac 12 (x \cdot v_k)- \frac{1}{4} t \left| v_{k} \right|^2 +  t \omega_k + \mu_{k} )}. $$

 Then there exists  $T_0>0$ and a solution $u(t)$ to the \NNls equation satisfying 
 
 $$\left\|u(t) -\sum_{k=1}^{K} R_k(t)  \right\|_{H^1 _0(\Omega)} \leq C  e^{- \sigma_0  t }   \qquad \forall t \in [T_0,+\infty), $$ 
for some constant $\sigma_0 >0 $ and $C >0.$
\end{theo}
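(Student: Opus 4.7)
The plan is to adapt the compactness construction of Merle to the exterior setting. Write $R(t) = \sum_{k=1}^{K} R_k(t)$; thanks to the cutoffs $\Psi_k$, $R(t) \in H^1_0(\Omega)$ for every $t$. Pick an increasing sequence $T_n \to +\infty$ and denote by $u_n$ the backward-in-time solution of \Nls with final data $u_n(T_n) = R(T_n)$, defined on its maximal backward interval. The crux is to establish a uniform bound
$$\|u_n(t) - R(t)\|_{H^1_0(\Omega)} \leq C\, e^{-\sigma_0 t}, \qquad t \in [T_0, T_n],$$
with $T_0$ independent of $n$. Granted this, $(u_n(T_0))_n$ is bounded in $H^1_0(\Omega)$, so along a subsequence it converges weakly to some $u^{\star}$; the solution $u$ of \Nls starting from $u^{\star}$ at time $T_0$ then satisfies the desired bound on $[T_0,+\infty)$ by strong continuity of the flow on compact time intervals together with weak lower semicontinuity of the $H^1_0$ norm.

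The uniform bound is proved by a bootstrap argument. Fix $n$ and work on the sub-interval of $[T_0,T_n]$ where $\epsilon(t) := u_n(t)-R(t)$ is small in $H^1_0(\Omega)$. On this interval I would set up a modulated decomposition $u_n(t) = \tilde{R}(t) + \tilde{\epsilon}(t)$ with $C^1$ parameters $\tilde{\omega}_k(t), \tilde{v}_k(t), \tilde{x}_k(t), \tilde{\mu}_k(t)$ close to the nominal ones, chosen so that $\tilde{\epsilon}(t)$ is orthogonal at each $t$ to the four standard generators of the symmetry group at each modulated profile. The modulation equations then give ODEs for the parameters with right-hand sides of size $\|\tilde{\epsilon}\|_{H^1_0}^{2}$ plus interaction and cutoff errors. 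To control $\tilde{\epsilon}$ itself, I would build a Lyapunov-type functional from localized mass, energy and momentum of $u_n$ weighted by space-time cutoffs adapted to the soliton trajectories $x_k^0 + t v_k$. The coercivity of the linearized operator around each $Q_{\omega_k}$, established on $\R^d$, can be transferred to $\Omega$ because the modulated centers stay far from $\partial\Omega$ for $t\ge T_0$ large and $|v_k|$ sufficiently large; this yields the two-sided control $\|\tilde{\epsilon}(t)\|_{H^1_0}^{2}\sim\text{functional}+\text{errors}$. The time derivative of the functional is then bounded by the same error terms, each of size $O(e^{-\sigma t})$; integrating from $t$ to $T_n$ and using $\tilde{\epsilon}(T_n)=0$ closes the bootstrap with a strictly smaller constant.

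The main obstacle I expect is the careful control of the error terms that are specific to the exterior problem. The profile $R$ fails to solve \Nls by two types of defects: the soliton--soliton interaction terms, whose exponential decay in $t$ is governed by $\min_{k\neq k'}|v_k-v_{k'}|$ together with the exponential decay of $Q_{\omega_k}$, and the cutoff errors coming from $\Delta\Psi_k$ and $\nabla\Psi_k\cdot\nabla Q_{\omega_k}$, which are supported in a neighbourhood of $\partial\Omega$ that lies deep in the exponential tail of the translated ground state, provided $t$ is large and $|v_k|$ sufficiently large; this is the quantitative source of the \emph{sufficiently large velocities} hypothesis. Transporting the coercivity inequality from $\R^d$ to $\Omega$ requires cutoff localization around each modulated soliton together with careful bookkeeping of the cross terms and of the Dirichlet boundary values of the localized error. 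Finally, differentiating the Lyapunov functional produces boundary contributions on $\partial\Omega$; thanks to the Dirichlet condition on $u_n$ and the strict convexity of $\Theta$, these either vanish or carry a favourable sign, as in classical Morawetz identities on exterior domains. Handling these obstacle-specific difficulties while keeping the standard modulation-plus-coercivity scheme is, I expect, the technical heart of the proof.
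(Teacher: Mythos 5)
Your overall scheme is the same as the paper's (backward solutions $u_n$ with data $R(T_n)$, a bootstrap with modulation, a Lyapunov functional built from the energy plus localized masses and momenta weighted by $\omega_k+\tfrac{|v_k|^2}{4}$ and $v_k$, and coercivity of the linearized operator transferred to $\Omega$), but your limiting step contains a genuine gap. Weak $H^1_0$ convergence of $u_n(T_0)$ together with ``strong continuity of the flow on compact time intervals'' does not yield convergence of $u_n(t)$ to the solution emanating from the weak limit: the nonlinear flow is not weakly continuous, and continuous dependence requires strong convergence of the data. The paper fills exactly this hole with a uniform-in-$n$ tightness lemma (Lemma 2.3): a cut-off mass/virial computation, using the uniform estimate on $[T_0,T_n]$, shows that $\sup_n\int_{|x|\ge M}|u_n(T_0)|^2\,dx\to 0$ as $M\to\infty$, which upgrades the weak $H^1_0$ limit to strong convergence in $L^2(\Omega)$; then the $L^2$ well-posedness theory gives $u_n(t)\to\widetilde u(t)$ in $L^2$ on the maximal interval, the uniform $H^1_0$ bound identifies $u_n(t)\rightharpoonup\widetilde u(t)$ in $H^1_0$ for each $t$, and weak lower semicontinuity transfers the exponential bound and excludes finite-time blow-up. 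Without this tightness argument your limit object need not inherit the estimate, so the construction does not close as written.

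Two further discrepancies with the paper's proof are worth flagging. First, your claim that the boundary terms arising when differentiating the localized momenta ``vanish or carry a favourable sign'' by strict convexity, as in Morawetz identities, does not apply to the weights actually needed here: the localizations are functions of $x_1-\lambda_k t$ (a fixed direction), and the associated boundary contribution is proportional to $\int_{\partial\Omega}|\partial_n u|^2\,n_1\,\uppsi\,d\sigma$, where $n_1$ changes sign on the boundary of a convex obstacle; the Morawetz sign is tied to radial or convex weights, not to a fixed direction, and the paper's Lemma 3.4 does not argue by sign at all. Second, the paper modulates only the scaling $\tilde\omega_k$, the translation $y_k$ and the phase $\tilde\mu_k$ (not the velocities), imposing orthogonality of $h$ to $\tilde R_k$, $i\tilde R_k$ and $\nabla\tilde Q_{\tilde\omega_k}\,\Psi_k e^{i\tilde\varphi_k}$; the parameter $\tilde\omega_k$ is then controlled through the almost-conserved localized masses $\mathcal{M}_k$ (Lemmas 3.4--3.5), which is what allows the functional $\mathcal{G}$ to close the bootstrap. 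Your four-parameter modulation including velocities is a different bookkeeping and would require reworking the functional and the orthogonality structure accordingly.
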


The structure of the proof of Theorem \ref{theorem-pranp} follows the approach used in the construction of multi-solitary wave solutions in the Euclidean setting, namely \cite{MartelMerle06} for the $L^2$-subcritical case and \cite{MR2815738} for the $L^2$-supercritical case. In Section~\ref{sec:construct-solution}, we assume the existence of a solution $u_n$ of the \NNls equation for $t \in [T_0, T_n]$ that satisfies a uniform estimate with initial data $u_n(T_n)=\sum_{k=1}^{K} R_k(T_n)$. We then apply a compactness argument to construct a solution $u$ to the \NNls equation on $[T_0,+\infty)$, which concludes the proof of Theorem \ref{theorem-pranp}. The compactness argument employed here is similar to the main idea introduced by F.~Merle in 1990 for the construction of solutions blowing up at several points in the critical case $p = 1 + \tfrac{4}{d}$, see also \cite{MR3124722,MR2228459,MartelMerleTai02}. In the subcritical case, it is well known that solitary waves are stable; see \cite{CaLi82} for the concentration-compactness approach and \cite{MR820338} for a different method based on the expansion of conservation laws. From \cite{MR820338}, there exists $\lambda > 0$ such that for any real-valued function $h \in H^1$, 
\begin{equation}
\label{coerv-subcrit}
\displaystyle (h,Q_\omega)=(h,\nabla Q_\omega)=0 \Longrightarrow \displaystyle\int \{\left| \nabla h\right|^2 + \omega \left| h\right|^2 - p Q_\omega^{p-1} \left|h \right| ^2  \} \geq \lambda \left\| h \right\|_{H^1}^2 .
\end{equation}
In Section~\ref{sec:uniformestimate}, we prove the existence of the solution $u_n$ together with the uniform estimate assumed in the previous section. To this end, we employ a modulation in the phase, translation, and scaling parameters in the decomposition of the solution for large times in order to obtain orthogonality conditions. We then apply a bootstrap argument to control the modulation parameters, combined with energy estimates and localization techniques for the coercivity estimate of the linearized operator.
Note that, even though we use some similar arguments, a large part of the proof of Theorem~\ref{theorem-pranp} is different. This is due to of the presence of the obstacle $\Theta,$ which makes the calculations more complicated.  \\ 

\begin{rem}
Theorem \ref{theorem-pranp} can be extended to general nonlinearity $f(|u|^2)u,$ with $f$ of Class $\mathcal{C}^1$ satisfying $f(0)=0$ and 
\begin{align*}
  \forall s \geq 1, \quad |f^{\prime}(s^2)| < C s^{p-2}, \quad \text{ for some } p < \frac{d+2}{d-2}. 
\end{align*}
Moreover, the soliton \eqref{soliton} is a solution where $Q_{\omega}$ is a solution of the following elliptic equation 
\begin{align*}
 - \Delta Q_{\omega} + \omega Q_{\omega} = f(Q_{\omega}^2) Q_{\omega}, \quad \text{ for } \omega>0.    
\end{align*}
A similar stability results holds for the general nonlinearity $f(|u|^2)u,$ see \cite{MR820338, CaLi82}. There exists $\Lambda>0$ such that for any real-valued function $h \in H^1$, 
\begin{equation}
\label{eq:GF-coer}
\displaystyle (h,Q_\omega)=(h,\nabla Q_\omega)=0 \Longrightarrow \displaystyle\int \{\left| \nabla h\right|^2 + \omega \left| h\right|^2 -  (f(Q_\omega^2) + 2 Q_\omega^2 f^{\prime}(Q_\omega^2) )|h|^2 \}  \geq \lambda \left\| h \right\|_{H^1}^2 .
\end{equation}
For the pure power nonlinearity, the above coercivity property is satisfied when $f(s^2) = s^{p-1}$ for $1 < p < 1 + \tfrac{4}{d}$; see \cite{Maris02,McLeod93}. We provide the proof in the pure power case, which can be adapted with slight modifications to the general nonlinearity. In particular, we highlight the differences or invoke the general nonlinearity when necessary throughout the paper.
\end{rem}

\begin{rem}
Observe that the solution constructed in Theorem \ref{theorem-pranp} is non–dispersive (it exists for all large times and does not scatter) in the sense that, by strong $H^1$ convergence, we have  
\[
\int_{\Omega} |u(t,x)|^2 \, dx = \sum_{k=1}^{K} \int_{\Omega} |R_k(t,x)|^2 \, dx, 
\qquad 
E(u(t)) = \sum_{k=1}^{K} E(R_k(t)).
\]
This means that the entire mass and energy of the solution are carried by the solitary wave components. In contrast, for a general solution, part of the $L^2$ norm is dispersed over time due to the equation’s dispersive nature. 
 
Note that the nonlinear Schr\"odinger equation is time reversible, i.e., if $u(t,x)$ is a solution then $\bar{u}(-t,x)$ is also a solution, therefore the result of Theorem \ref{theorem-pranp} can be stated in similar way for $t \to -\infty.$ 
\end{rem}
The analysis of dispersive equations in exterior domains, motivated by understanding the influence of the geometry on the dynamics of solutions, since the late 1950s. Early contributions concern wave-type models with Dirichlet or Neumann boundary conditions in the exterior of an obstacle. In 1959, Wilcox \cite{Wilcox59} investigated the decay of linear waves outside a sphere. Morawetz later extended this to star-shaped obstacles \cite{Morawetz61}, and in joint work with Ralston and Strauss, to the general class of non-trapping geometries \cite{MoraRalstonStraussCorec78}. \\

In the case of the nonlinear Schr\"odinger equation, the Cauchy problem in $H^1_0(\Omega)$ was first studied by Burq, Gérard, and Tzvetkov \cite{BuGeTz04a},  for sub-cubic nonlinearities ($p<3$) under non-trapping assumptions. Subsequent developments include Anton’s result for the cubic case \cite{AnRa08}, the work of Planchon and Vega \cite{PlVe09} in the energy-subcritical regime ($1<p<5$) in $d=3$, and the energy-critical theory ($p=5$) of Planchon and Ivanovici \cite{MR2683754}. Related results for convex obstacles appear in \cite{BlairSmithSogge2012, Ivanovici07, Ivanovici10, DongSmithZhang2012}. Local well-posedness in the critical Sobolev space was first obtained in \cite{MR2683754} for $3+\tfrac{2}{5}<p<5$, and later extended to $\tfrac{7}{3}<p<5$ in \cite{Ou19}  using the fractional chain rule developed in \cite{killip2015riesz}.  Let us also mention the recent work on dispersive estimates outside non-trapping obstacles \cite{ThomasYang24}. \\

The nonlinear dynamics of the focusing {\rm NLS}$_\Omega$ equation outside an obstacle was studied in \cite{killip2015riesz}, where the authors proved that solutions below a natural mass–energy threshold scatter to a linear solution. In \cite{Ou19}, we constructed a solution $u(t)$ of the focusing {\rm NLS}$_\Omega$ equation outside an obstacle in the $L^2$-supercritical case. This solution demonstrates the optimality of the threshold for global existence and scattering obtained in \cite{killip2015riesz}.  The existence of blow-up solutions with negative energy was initiated in \cite{OL22-blow-up}, where a new modified variance quantity was introduced to establish the existence of blow-up solutions and a blow-up criterion under the mass–energy threshold. The dynamics at the mass–energy threshold were further studied in \cite{ThomasOSvetlana22}, where it was shown that all solutions at the threshold are globally defined and scatter in $H^1_0(\Omega)$ in both time directions.  The behavior of solitary waves after interaction or collision with an obstacle was numerically investigated in \cite{OsvetlanaKai23}. The authors distinguished two types of interaction and showed how the presence of the obstacle affects the overall behavior of the solution after the interaction.

\section{Construction of the solution}
\label{sec:construct-solution}
This section is devoted to the proof of Theorem \ref{theorem-pranp}, relying on a uniform estimate that will be derived in the next section. 

Let $$R_k(t,x)= Q_{\omega_k}(x-x^{0}_{k}-t v_k) \Psi_k(x) e^{i ( \frac 12 (x \cdot v_k)- \frac{1}{4} t \left| v^{k} \right|^2 + \omega_k t + \mu_{k} )}, $$ be as defined in Theorem \ref{theorem-pranp}, which behave asymptotically  as a solitary wave solution to the \nnls equation in the whole Euclidean space, see \cite{Ou19}.

The solution $u(t)$ described in Theorem \ref{theorem-pranp} is obtained through an asymptotic argument. Consider an increasing sequence of times $(T_n)_{n \geq 1}$ such that $T_n \to \infty$ as $n \to \infty$. For $n \geq 1,$ we denote by $u_n$ the unique global $H^1$ solution of
\begin{equation}
\label{eq:defun}
\begin{cases}
i \partial_t u_n + \Delta u_n =-|u_n |^{p-1} u_n,  \\
u_n(T_n)=R(T_n), 
\end{cases}
\end{equation}
where $ R(t):= \displaystyle \sum_{k=1}^K R_k(t).$ Next, to establish Theorem \ref{theorem-pranp}, we claim the following uniform estimate.

\begin{prop}[Uniform estimate]
\label{prop:uniform-estimate}
There exist $n_0 \geq 0, \; T_0>0, \; C_0 >0$ and $\sigma_0>0$ such that for all $n \geq n_0,$ the solution $u_n$ of \eqref{eq:defun} is well-defined on the time interval $[T_0, T_n]$ and satisfies, 
\begin{equation}
\label{eq:uniform-estimate}
\left\|  u_n(t)-R(t) \right\|_{H^1_0(\Omega)} \leq C_0 e^{-\sigma_0 t }.   
\end{equation}    
\end{prop}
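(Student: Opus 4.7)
The plan is a bootstrap argument on the interval $[T_0,T_n]$, combined with modulation theory, a localized Lyapunov functional, and the subcritical coercivity \eqref{coerv-subcrit}. Since $u_n(T_n)=R(T_n)$, the estimate \eqref{eq:uniform-estimate} is trivial at $t=T_n$. Fix a small $\sigma_0>0$ and a large $C_0>0$ to be determined, and set
\begin{equation*}
T_n^{\ast} := \inf\Bigl\{\tau \in [T_0,T_n] : \|u_n(t)-R(t)\|_{H^1_0(\Omega)} \leq 2C_0 e^{-\sigma_0 t} \; \forall t\in[\tau,T_n]\Bigr\}.
\end{equation*}
The goal is to strictly improve this to $C_0 e^{-\sigma_0 t}$ on $[T_n^{\ast},T_n]$, which by continuity forces $T_n^{\ast}=T_0$.

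First I would carry out the \emph{modulation step}. On $[T_n^{\ast},T_n]$, introduce time-dependent modulation parameters $(\tilde\mu_k(t),\tilde x_k(t))$ close to $(\mu_k,x_k^0)$ and write $\varepsilon(t):=u_n(t)-\tilde R(t)$, where $\tilde R$ is obtained from $R$ by substituting the modulated parameters. The implicit function theorem provides $2K$ parameters making $\varepsilon$ orthogonal, for every $k$, to $iQ_{\omega_k}(\cdot-\tilde x_k-tv_k)e^{i\Phi_k}$ and to $\nabla Q_{\omega_k}(\cdot-\tilde x_k-tv_k)e^{i\Phi_k}$, where $\Phi_k$ denotes the corresponding Galilean phase. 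Differentiating the orthogonality conditions in time yields modulation ODE estimates
\begin{equation*}
|\dot{\tilde\mu}_k-\omega_k| + |\dot{\tilde x}_k-v_k| \lesssim \|\varepsilon(t)\|_{H^1_0(\Omega)} + e^{-c_0 t}
\end{equation*}
for some $c_0>0$; the last term absorbs the errors created by $\Psi_k$ and by $\partial\Omega$, which are exponentially small since $Q_{\omega_k}(\cdot-\tilde x_k-tv_k)$ decays exponentially away from its center, and this center lies at distance $\gtrsim t$ from $\partial\Omega$ and from $\operatorname{supp}\nabla\Psi_k$ thanks to the distinctness and size of the velocities $v_k$.

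Next I would set up a \emph{localized Lyapunov functional}. Using the separation of the soliton centers $\tilde x_k+tv_k$ for $T_0$ large, choose a smooth time-dependent partition of unity $(\phi_k(t,x))_k$ localizing around each center, and define
\begin{equation*}
\mathcal F(t) := E_\Omega(u_n(t)) + \sum_{k=1}^K \Bigl( \tfrac{\omega_k+|v_k|^2/4}{2}\int_\Omega \phi_k |u_n|^2\,dx - \tfrac{v_k}{2}\cdot\int_\Omega \phi_k\,\im(\bar u_n\nabla u_n)\,dx \Bigr).
\end{equation*}
A Taylor expansion of $\mathcal F(t)-\mathcal F(\tilde R(t))$ to second order in $\varepsilon$, together with the orthogonality conditions and the coercivity \eqref{coerv-subcrit} applied to the $\phi_k$-localized pieces of $\varepsilon$ (the exponential decay of $Q_{\omega_k}$ makes the cross terms negligible), gives
\begin{equation*}
\mathcal F(t)-\mathcal F(\tilde R(t)) \geq \tfrac{\lambda}{2}\|\varepsilon(t)\|^2_{H^1_0(\Omega)} - O(\|\varepsilon\|^3_{H^1_0(\Omega)}) - O(e^{-c_0 t}).
\end{equation*}
Differentiating $\mathcal F$ in time, the bulk conservation-law terms cancel: what survives are commutator terms with $\partial_t\phi_k,\nabla\phi_k$, supported where every soliton is exponentially small, and source terms coming from $\Psi_k R_k$ not being an exact solution of the NLS equation on $\Omega$, supported near $\partial\Omega$ where again the solitons are exponentially small. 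All of these are bounded by $e^{-c_0 t}\|\varepsilon\|_{H^1_0(\Omega)}+e^{-2c_0 t}$. Integrating from $t$ to $T_n$, using $\varepsilon(T_n)=0$, combining with the coercive lower bound, and feeding back the modulation ODEs closes the bootstrap for any $\sigma_0<c_0$ provided $T_0$ is large and $C_0$ chosen appropriately.

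The main obstacle, and the place where the argument departs from the Euclidean multi-soliton construction, is the quantitative control of the \emph{obstacle-induced errors}: the cutoffs $\Psi_k$ prevent $R_k$ from being an exact soliton, and the Lyapunov computation must also accommodate the Dirichlet boundary condition and the fact that $\Delta_\Omega\neq\Delta_{\R^d}$. The mechanism that makes all these contributions of order $e^{-c_0 t}$ is the linear-in-time separation $|\tilde x_k+tv_k|\gtrsim t$, which interacts with the exponential decay of $Q_{\omega_k}$ to produce uniform-in-$n$ exponential smallness. Verifying this separation rigorously along the bootstrap, while controlling the modulation parameters, is the technical core of the argument.
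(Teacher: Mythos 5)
Your overall strategy (bootstrap on $[T_0,T_n]$ backwards from $u_n(T_n)=R(T_n)$, modulation, a localized mass--momentum--energy functional, coercivity) is the same as the paper's, but as written there is a genuine gap in the coercivity step. You modulate only in the phases $\tilde\mu_k$ and the translations $\tilde x_k$, which yields orthogonality of $\varepsilon$ to $iQ_{\omega_k}e^{i\Phi_k}$ and to $\nabla Q_{\omega_k}e^{i\Phi_k}$, i.e.\ (in the soliton frame) $(\varepsilon_2,Q_{\omega_k})=0$ and $(\varepsilon_1,\nabla Q_{\omega_k})=0$. The subcritical coercivity \eqref{coerv-subcrit} that you invoke requires in addition $(\varepsilon_1,Q_{\omega_k})=0$: the $L_+$ part of the quadratic form has a negative direction with nonzero component along $Q_{\omega_k}$, so without this extra orthogonality the lower bound $\mathcal F(t)-\mathcal F(\tilde R(t))\gtrsim\|\varepsilon\|_{H^1_0}^2$ is simply false, and nothing in your $1+d$ parameters per soliton can produce it. The paper resolves exactly this point by also modulating the frequencies $\tilde\omega_k(t)$ (Lemma~\ref{lem:modu-static}, Corollary~\ref{Cor:Mod-u(t)}), which gives the missing condition $\re\int\tilde R_k\bar h=0$, and then controls $|\tilde\omega_k(t)-\omega_k|$ through the almost-conserved localized masses $\mathcal M_k$ together with $h(T_n)=0$, $\tilde\omega_k(T_n)=\omega_k$ and the strict subcritical monotonicity $\frac{d}{d\omega}\int Q_\omega^2\,dx>0$ (Lemma~\ref{lem:omega_k}). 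An alternative repair would be to keep your two families of parameters and bound the bad projection $\re\int R_k\bar\varepsilon\,\varphi_k$ directly from the almost-conservation of $\mathcal M_k$, but your proposal does neither.

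A second, related problem is the error bookkeeping when you differentiate $\mathcal F$. The commutator terms produced by $\partial_t\phi_k,\nabla\phi_k$ are quadratic in $u=\tilde R+\varepsilon$ localized in the transition regions; the soliton part is exponentially small there, but the $|\varepsilon|^2$ part is not, so these terms are not $O(e^{-c_0t}\|\varepsilon\|_{H^1_0}+e^{-2c_0t})$. Under the bootstrap they are of size $\tfrac{C}{\Lambda}\,C_0^2e^{-2\sigma_0 t}$, where the only gain is the factor $1/\Lambda$ coming from the width of the cutoffs (this is the content of Lemma~\ref{lem:localMP}). Without exploiting that gain the integrated error is $\sim C\,C_0^2e^{-2\sigma_0t}$ with $C$ a fixed constant, and the bootstrap cannot be improved by any choice of $C_0$ or $T_0$. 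The paper closes it precisely by tying the localization scale to the bootstrap constant ($\Lambda=\alpha_0^2$), so that the constant in front of $e^{-2\sigma_0t}$ becomes independent of $\alpha_0$, and then taking $\alpha_0$ large; your sketch should incorporate this (or an equivalent mechanism) to actually conclude.
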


\begin{proof}
    We will assume this proposition to prove Theorem \ref{theorem-pranp} postponing the proof of it to Section~\ref{sec:uniformestimate}.
\end{proof}

We now begin the proof of Theorem \ref{theorem-pranp}, assuming the main uniform estimate provided in Proposition \ref{prop:uniform-estimate}. The argument is based on a compactness method combined with the uniform estimate \eqref{eq:uniform-estimate}. By suitably relabeling the indices, we may set $n_0 = 0$ in Proposition \ref{prop:uniform-estimate}.\\

\begin{proof}[Proof of Theorem $\ref{theorem-pranp}$ assuming Proposition $\ref{prop:uniform-estimate}$] 
The proof proceeds in several steps.
\begin{itemize}
\item Step 1: Compactness argument.
The Proposition $\ref{prop:uniform-estimate}$ implies that there exists a sequence $u_n(t)$ of solution defined on $[T_0,T_n]$ such that 

 $$ \forall n \in \N, \; \forall t \in [T_0,T_n], \quad   \|u_n(t) - R(t) \|_{H^1_0(\Omega)} \leq C_0 e^{-\sigma_0 t } . $$

\begin{lem}
\label{limsup}
\begin{equation*}
\displaystyle \lim_{M\rightarrow +\infty} \sup_{\,n\in \mathbb{N}}  
\int_{|x| \geq M}  u_n^2(T_0,x)  \, dx =0 .
\end{equation*}
\end{lem}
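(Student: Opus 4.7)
The plan is to combine the uniform bound from Proposition~\ref{prop:uniform-estimate} with the exponential decay of the profiles $Q_{\omega_k}$. I would first write $u_n(T_0) = R(T_0) + \epsilon_n(T_0)$, where Proposition~\ref{prop:uniform-estimate} gives $\|\epsilon_n(T_0)\|_{H^1_0(\Omega)} \leq C_0 e^{-\sigma_0 T_0}$ uniformly in $n$, and then use the pointwise bound $|u_n|^2 \leq 2|R|^2 + 2|\epsilon_n|^2$ to reduce matters to controlling the two tails
\begin{equation*}
\int_{|x|\geq M} |R(T_0,x)|^2 \, dx \quad \text{and} \quad \int_{|x|\geq M} |\epsilon_n(T_0,x)|^2 \, dx
\end{equation*}
separately.

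The first tail is $n$-independent: since $R(T_0) = \sum_{k=1}^{K} Q_{\omega_k}(\cdot - x_k^0 - T_0 v_k)\,\Psi_k\, e^{i\phi_k}$ is a fixed finite sum of exponentially decaying profiles centered at the bounded points $\{x_k^0 + T_0 v_k\}$, this tail is $O(e^{-cM})$ as $M \to \infty$. The second tail is the main point, and I would exploit the decisive observation that $\epsilon_n(T_n) \equiv 0$ because $u_n(T_n) = R(T_n)$, propagating tightness backwards in time from $T_n$. Introducing a smooth radial cut-off $\chi_M$ with $\chi_M \equiv 0$ on $|x|\leq M$, $\chi_M \equiv 1$ on $|x|\geq 2M$, and $\|\nabla \chi_M\|_\infty \leq C/M$, set $J_n(t) := \int \chi_M^2 |\epsilon_n(t,x)|^2 \, dx$, so that $J_n(T_n) = 0$. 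Writing the equation for $\epsilon_n$ as $i\partial_t \epsilon_n + \Delta_\Omega \epsilon_n = -(|u_n|^{p-1} u_n - |R|^{p-1} R) - F(t)$, where the source $F$ collects the terms produced by the cut-offs $\Psi_k$ and by the obstacle, and is supported in a neighborhood of $\Theta$ (hence vanishes on $\mathrm{supp}\,\chi_M$ once $M$ is large), multiplying by $\chi_M^2 \overline{\epsilon_n}$ and integrating by parts yields an identity from which, using $\|\epsilon_n(t)\|_{H^1} \leq C_0 e^{-\sigma_0 t}$ and the subcritical Sobolev embedding $H^1_0(\Omega) \hookrightarrow L^{p+1}(\Omega)$ (valid since $p<1+4/d$), one obtains $|\partial_t J_n(t)| \leq g(t)$ with $g(t)$ of size $O(M^{-1})\,e^{-2\sigma_0 t}$ plus terms that are exponentially small in $t$. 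Integrating backwards from $T_n$ using $J_n(T_n)=0$ then gives $J_n(T_0) \leq C/M$, uniformly in $n$, which combined with the bound on $R(T_0)$ concludes the proof.

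The main obstacle is preventing the nonlinear difference $|u_n|^{p-1} u_n - |R|^{p-1} R$ from producing a Gronwall-type coupling of the form $|\partial_t J_n| \lesssim J_n$ over the potentially long interval $[T_0, T_n]$, which would introduce an exponential factor $e^{c(T_n - T_0)}$ and destroy the uniformity in $n$. This coupling is avoided by noting that on $\mathrm{supp}\,\chi_M$ the profile $|R|$ is exponentially small — possibly after replacing $\chi_M$ by a time-dependent cut-off that avoids neighborhoods of the moving soliton centres $x_k^0 + t v_k$ — so that the nonlinear term is controlled directly by $\|\epsilon_n\|_{L^{p+1}}^{p+1}$, which decays exponentially in $t$ thanks to~\eqref{eq:uniform-estimate} and Sobolev embedding, rather than by $J_n(t)$ itself.
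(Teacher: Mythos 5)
Your overall strategy (split $u_n(T_0)=R(T_0)+\epsilon_n(T_0)$, control the $R$-tail directly, and propagate tail control of $\epsilon_n$ backwards from $T_n$ where $\epsilon_n(T_n)=0$) is reasonable in spirit, but the key quantitative step fails as written. You claim $|\partial_t J_n(t)|\le O(M^{-1})e^{-2\sigma_0 t}$ plus ``terms exponentially small in $t$'' and integrate over $[T_0,T_n]$. Those extra terms, however, carry no factor of $M^{-1}$: the nonlinear contribution reduces to $\int\chi_M^2\,\im\big((|u_n|^{p-1}-|R|^{p-1})u_n\bar R\big)$, and the non-compactly-supported part of your source $F$ is the soliton--soliton interaction $|R|^{p-1}R-\sum_k|R_k|^{p-1}R_k$, which is \emph{not} supported near $\Theta$. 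Both are only $O(e^{-c t})$, so after integration they produce a constant of size $e^{-cT_0}$ that is independent of $M$. Since $T_0$ is fixed, your bound only gives $\sup_n\int_{|x|\ge 2M}|\epsilon_n(T_0)|^2\lesssim M^{-1}+e^{-cT_0}$, which does not tend to $0$ as $M\to\infty$, whereas the lemma needs the limit to be exactly zero (this is what upgrades $L^2_{loc}$ to $L^2$ convergence of $u_n(T_0)$). Moreover, your remark that $|R|$ is exponentially small on $\mathrm{supp}\,\chi_M$ is false for $t\gtrsim M/\max_k|v_k|$, because the solitons travel to spatial infinity; the ``time-dependent cut-off avoiding the soliton centres'' is precisely where the work lies and is not carried out. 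A repair along your lines would require, e.g., splitting the time integral at $t\sim c_0M$: for $t\le c_0M$ the centres $x_k^0+tv_k$ remain in $\{|x|\le M/2\}$, so every term carrying a factor of $R$ (including the interaction part of $F$) is $O(e^{-cM})$ on $\{|x|\ge M\}$, while for $t\ge c_0M$ the exponential decay in $t$ supplies $O(e^{-c M})$; only then is the integrated error $o_M(1)$ uniformly in $n$.

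The paper avoids all of this by never using the equation for $\epsilon_n$. Given $\varepsilon>0$, it fixes an intermediate time $T_\varepsilon$ with $C_0^2e^{-2\sigma_0T_\varepsilon}<\varepsilon$, so that the tail of $u_n(T_\varepsilon)$ beyond some $M(\varepsilon)$ is at most $4\varepsilon$ (tail of $R(T_\varepsilon)$ plus the uniform estimate), and then transports this smallness back over the \emph{fixed-length} interval $[T_0,T_\varepsilon]$ using the mass-flux identity for the full solution $u_n$ with a cut-off at a large scale $K_\varepsilon$: since $\im(|u|^{p-1}u\,\bar u)=0$, the nonlinearity contributes nothing, the bound is simply $\big|\tfrac{d}{dt}\int|u_n|^2 f\big|\le \tfrac{4\alpha}{K_\varepsilon}$, and choosing $K_\varepsilon\gtrsim(T_\varepsilon-T_0)\alpha/\varepsilon$ makes the leaked mass at most $\varepsilon$. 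No Gronwall coupling, no source terms, and no dependence on $T_n$ enter; you may want to adopt this route or implement the time-splitting described above to close your argument.
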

\begin{proof} 

The proof of this lemma proceeds along the same lines as in \cite{MR2271697}, for the multi-soliton solutions of \nnls equation in the subcritical case, and as in \cite{Ou19}, which treats the construction of solitary wave like-solutions for the \NNls equation. We provide the details here for the sake of completeness of this paper. 

Let $\varepsilon > 0 $ and $T_{\varepsilon} \geq T_0 $ such that: $C_0^2 e^{ -2\sigma_0 T_{\varepsilon} }< \varepsilon$, where  $C_0$ and $ \sigma_0 $  are the same constants as in the Proposition \ref{prop:uniform-estimate}. For $n$ sufficiently large such that $T_n \geq T_{\varepsilon}$, and in view of \eqref{eq:uniform-estimate}, we have

\begin{equation*}
    \int_{\Omega} \left|u_n(T_{\varepsilon}) - R(T_{\varepsilon})  \right|^{2} dx \leq C_0^2 e^{ -2\sigma_0 t }< \varepsilon.
\end{equation*}

Let $M(\varepsilon) > 0$ such that 

$$ \int_{|x| \geq M(\varepsilon)} \left| R(T_{\varepsilon})  \right|^{2} \, dx < \varepsilon ,$$ 
by direct computation,

\begin{equation*}
\int_{\left|x\right| \geq M(\varepsilon)} \left| u_n(T_{\varepsilon}) \right|^2  dx \leq 4 \varepsilon .
\end{equation*}

Now consider a $C^1$ cut-off function $f:\R \longrightarrow [0,1]$ such that \\
\begin{equation*}
f \equiv 0 \; \; \text{ on}  \;  ]-\infty,1]; \quad \qquad  0<f'<2 \; \;  \text{on} \; (1,2); \qquad \qquad f \equiv 1\; \; \text{ on }\; (2,+\infty).
\end{equation*}

For $K_{\varepsilon}>0$ to be specified later, we can check that
\begin{equation}
\label{eq_u_n-f}
\frac{d}{dt} \int_{\Omega} \left|u_n(t)  \right|^2 f\left(\frac{\left|x \right|- M(\varepsilon)}{K_{\varepsilon} } \right)dx = \frac{-2}{K_{\varepsilon}} \im \int_{\Omega} u_n(t) \left(\nabla \overline{u_n}. \frac{x}{\left|x\right|}\right)  f'\left(\frac{\left|x\right|-M(\varepsilon)}{K_{\varepsilon}} \right)dx.
\end{equation}

From Proposition \ref{prop:uniform-estimate}, there exists $  \alpha > 0,$  $\forall n$ and $\forall t \geq T_0, \;  \left\|u_n(t) \right\|_{H^1_0}^2  \leq \alpha.$ Using $\eqref{eq_u_n-f}$ we get 
\begin{align*}
\left|\frac{d}{dt}  \int_{\Omega} \left| u_n(t) \right|^2  f\left(\frac{\left|x\right|- M(\varepsilon)}{K_{\varepsilon}}\right) \right|  \leq \frac{4}{K_{\varepsilon} }\left\| u_n(t) \right\|_{H^1_0}^2 \leq \frac{ 4 }{K_{\varepsilon}}  \, \alpha .
\end{align*}
Now, we choose $K_{\varepsilon} > 0$ independently of $n$ such that $ K_{\varepsilon}  \geq  \left(\frac{T_\varepsilon- T_0}{\varepsilon}  \right) 4  \alpha , $
which yields

$$\left|\frac{d}{dt} \int_{\Omega} \left|u_n(t) \right|^2 f\left(\frac{\left|x\right|-M(\varepsilon)}{K_\varepsilon} \right) \right| \leq \frac{\varepsilon}{T_\varepsilon-T_0} \,. $$
Integrating on the time interval $\left[T_0,T_\varepsilon\right]$, we get
\begin{align*}
& \int_{\Omega} \left|u_n(T_0) \right|^2 f\left(\frac{\left|x\right|-M(\varepsilon)}{K_\varepsilon}\right) dx  - \int_{\Omega} \left| u_n(T_\varepsilon) \right|^2 f\left(\frac{\left|x\right|-M(\varepsilon)}{K_\varepsilon}\right)  dx \\ &\leq \int_{T_0}^{T_\varepsilon} \left| \frac{d}{dt} \int \left| u_n(t) \right|^2 f\left(\frac{\left|x\right|-M(\varepsilon)}{K_\varepsilon}\right)  dx \right| dt  \\ & \leq \varepsilon .
\end{align*}
Hence, 
$$ \int_{\Omega} \left| u_n(T_0) \right|^2 f\left(\frac{\left|x\right|-M(\varepsilon)}{K_\varepsilon}\right) dx \leq \varepsilon + \int_{\Omega} \left|u_n(T_\varepsilon) \right|^2  f\left(\frac{\left|x\right|-M(\varepsilon)}{K_\varepsilon}\right) dx . $$

Due to the properties of $f$, we have

\begin{align*}
\int_{\left|x\right|>2 K_{\varepsilon}+ M(\varepsilon) } \left|u_n(T_0)\right|^2 dx &\leq \int_{\Omega} \left|u_n(T_0)\right|^2 f\left(\frac{\left|x\right|-M(\varepsilon)}{K_\varepsilon}\right) dx  \\  & \leq  \varepsilon +\int_{\Omega} \left|u_n(T_\varepsilon) \right|^2  f\left(\frac{\left|x\right|-M(\varepsilon)}{K_\varepsilon}\right) dx \\ & \leq \varepsilon + \int_{\left|x\right| \geq M(\varepsilon)} \left| u_n(T_\varepsilon) \right|^2 dx \\ & \leq \varepsilon + 4 \varepsilon= 5 \varepsilon . 
\end{align*}

This concludes the proof of the lemma.
\end{proof}

By the main Proposition  \ref{prop:uniform-estimate}, we have 
 $$ \left\|u_n(T_0)\right\|_{H^1_0(\Omega)} \leq \alpha . $$
Since $H^1_0$ is a Hilbert space, there exists a subsequence of $(u_n(t))_n$ that we still denote by $(u_n(t))_n$ to simplify notation and $\mathcal{U}_0 \in H^1_0(\Omega) $ such that 
$$ u_{n}(T_0) \rightharpoonup \mathcal{U}_0  \quad \text{in} \; \; H^1_0(\Omega),  \quad   \text{as} \; \; n \longrightarrow +\infty .$$

By the compactness of the embedding of $H^1_0(\{ \left|x \right| \leq A \})$ into $L^2(\{ \left|x \right| \leq A \})$, we have 
$$u_n(T_0) \longrightarrow \mathcal{U}_0 \quad \text{ in} \quad L^2_{loc}(\Omega) \, .$$

By Lemma $\ref{limsup}$, we get 
\begin{align}
    \label{eq:convL2}
    u_n(T_0) \longrightarrow \mathcal{U}_0 \quad \text{ in} \quad L^2(\Omega).
\end{align}
\end{itemize}
\begin{itemize}

\item Step 2: Construction of the solution. 
Recall that the \NNls equation is well-posed in $L^2(\Omega)$. Let $\widetilde{u}$ be the maximal solution of\begin{equation}
\begin{cases}
i\partial_t \widetilde{u}+\Delta \widetilde{u}= -|\widetilde{u}|^{p-1}\widetilde{u} \qquad  \forall (t,x)\in [T_0,\widetilde{T})\times\Omega, \\  \widetilde{u}(T_0,x) =\mathcal{U}_0  \qquad\qquad \qquad\forall x \in \Omega , \\
	\widetilde{u}(t,x)=0 \qquad\qquad \qquad  \quad \forall(t,x)\in[T_0,\widetilde{T})\times \partial\Omega . 
\end{cases}
\end{equation}

For $n$ large enough, $ u_n(t)$ is defined for all $ t \in [T_0,\widetilde{T}).$ 
Then by the continuity dependence of initial data upon initial data in $L^2,$ and \eqref{eq:convL2}  we have
$$u_n(t) \longrightarrow \widetilde{u}(t)  \quad \text{in} \quad L^2(\Omega). $$

From Proposition \ref{prop:uniform-estimate}, we know that for $n$ large enough $u_n(t)$ is uniformly bounded in $H^1_0(\Omega)$. Then necessarily, 

$$ \forall t \in [T_0,\widetilde{T}), \quad u_n(t) \rightharpoonup \widetilde{u}(t) \quad \text{ in } \; \,  H^1_0(\Omega).  $$

Using the property of weak convergence and by the main proposition, it follows that 

$$\forall t \in [T_0,\widetilde{T}), \quad  \left\|\widetilde{u}(t)- R(t) \right\|_{H^1_0(\Omega)} \leq \lim \inf \left\|u_n(t)-R(t)\right\|_{H^1_0(\Omega)} \leq C_0 e^{- \sigma_0 t } .$$

In particular, we deduce that, $\widetilde{u}$ is bounded in $H^1_0(\Omega)$. From the blow-up alternative, we get $\widetilde{T}=+\infty$. Finally, we have $\widetilde{u} \in C ([T_0,+\infty), H^1_0(\Omega))$  and by \eqref{eq:uniform-estimate}, we obtain

$$\forall t \in [T_0,+\infty), \quad \left\| \widetilde{u}(t)-R(t)\right\|_{H^1_0(\Omega)} \leq e^{-\delta \sqrt{\omega} |v| t },$$ 
which concludes the proof of Theorem \ref{theorem-pranp}.
\end{itemize}\end{proof}

\section{Proof of the uniform estimate}
\label{sec:uniformestimate}
In this section, we establish the uniform estimate as a consequence of the following bootstrap result.
\begin{prop}
\label{prop:bt}
    There exist $n_0 >0, \, T_0>0, \, \alpha_0 >0, \sigma_0>0$ such that for all $n \geq n_0$ and for all $\bar{t} \in [T_0,T_n],$ if 
\begin{align}
\label{eq:bt1}
\forall t \in [\bar{t}, T_n], \; \left\| u_n(t) - R(t)\right\|_{H^1_0(\Omega)} \leq \alpha_0 e^{-\sigma_0 t } ,        
\end{align}
then \begin{align}
\label{eq:bt2}
\forall t \in [\bar{t}, T_n], \; \left\| u_n(t) - R(t)\right\|_{H^1_0(\Omega)} \leq \frac{\alpha_0}{2} e^{-\sigma_0 t } .
\end{align}
\end{prop}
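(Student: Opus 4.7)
The plan is to run the standard Martel–Merle modulation/energy bootstrap for the $L^2$-subcritical NLS, adapted to the Dirichlet exterior setting. First I would decompose
\[
u_n(t) = \widetilde R(t) + \varepsilon(t), \qquad \widetilde R(t,x) = \sum_{k=1}^K Q_{\widetilde\omega_k(t)}\!\bigl(x - \widetilde x_k(t)\bigr)\,\Psi_k(x)\,e^{i\Phi_k(t,x)},
\]
where $\widetilde\omega_k(t), \widetilde x_k(t)$, and the phase $\widetilde\mu_k(t)$ inside $\Phi_k$ are time-dependent modulations of the prescribed parameters. Applying the implicit function theorem to the hypothesis \eqref{eq:bt1}, these parameters can be selected so that $\varepsilon(t)$ satisfies orthogonality conditions adapted to the kernel of the $\R^d$-linearized operator at each soliton (phase, translations, and scaling). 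Since the prescribed trajectory $x_k^0+v_k t$ escapes to infinity while $\Theta$ is compact, $\Psi_k\equiv 1$ on the essential support of $Q_{\widetilde\omega_k}(\cdot-\widetilde x_k(t))$ for every $t\geq T_0$, so the whole-space coercivity \eqref{coerv-subcrit} can be applied after localization, without geometric correction from the obstacle.

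Next, I would differentiate the orthogonality relations along the flow and project the equation for $\varepsilon$ onto the orthogonality directions to extract an ODE system for the modulation parameters. Using the exponential decay of $Q_{\omega_k}$, the separation of trajectories ($|v_k-v_{k'}|\,t\to\infty$ since $v_k\neq v_{k'}$), and their separation from $\Theta$, all cross-interaction integrals and obstacle-induced error integrals are controlled by $e^{-\kappa t}$, which yields
\[
|\dot{\widetilde\omega}_k(t)| + |\dot{\widetilde\mu}_k(t)| + \bigl|\dot{\widetilde x}_k(t)-v_k\bigr| \;\lesssim\; \|\varepsilon(t)\|_{H^1_0(\Omega)}^2 + e^{-\kappa t}
\]
for some $\kappa>0$ depending on $\min_k\sqrt{\omega_k}$ and $\min_{k\neq k'}|v_k-v_{k'}|$.

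The core step is the construction of an almost-conserved, localized, Weinstein–type functional
\[
\mathcal{H}(t) := E_\Omega[u_n(t)] + \sum_{k=1}^K \widetilde\omega_k(t)\!\int_\Omega \chi_k(t,x)\,|u_n|^2\,dx - \tfrac{1}{2}\sum_{k=1}^K v_k\cdot\im\!\int_\Omega \chi_k(t,x)\,\overline{u_n}\,\nabla u_n\,dx,
\]
where the cutoffs $\chi_k(t,\cdot)$ are concentrated near $\widetilde x_k(t)$, supported away from $\partial\Omega$ for $t\geq T_0$, and form a partition separating the $K$ soliton regions. Computing $\dot{\mathcal{H}}$, the only surviving terms are (i) commutators involving $\partial_t\chi_k$ and $\nabla\chi_k$, bounded by $e^{-\kappa t}$ thanks to the geometric separation, and (ii) Dirichlet boundary integrals on $\partial\Omega$, which vanish because $\chi_k\equiv 0$ near $\Theta$. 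Expanding $\mathcal{H}$ around $\widetilde R(t)$ and exploiting the orthogonality conditions, the quadratic-in-$\varepsilon$ part is a sum of Galilean/translation-conjugated linearized operators, one per soliton region; the localized coercivity from \eqref{coerv-subcrit} then gives
\[
\mathcal{H}(t) - \mathcal{H}(\widetilde R(t)) \;\geq\; \lambda\,\|\varepsilon(t)\|_{H^1_0(\Omega)}^2 - C\|\varepsilon(t)\|_{H^1_0(\Omega)}^3 - C e^{-\kappa t}.
\]

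To close the bootstrap, note that $u_n(T_n)=R(T_n)$ forces $\varepsilon(T_n)=0$ (the modulation parameters coincide with the prescribed ones there). Integrating the smallness of $\dot{\mathcal{H}}$ backwards from $T_n$ to $t$ gives $|\mathcal{H}(t)-\mathcal{H}(\widetilde R(t))|\lesssim e^{-\kappa t}$, and inserting this into the coercivity estimate while absorbing the cubic term through \eqref{eq:bt1} yields $\|\varepsilon(t)\|_{H^1_0(\Omega)}^2\lesssim e^{-\kappa t}$. Choosing $2\sigma_0<\kappa$ and $T_0$ sufficiently large, this is strictly smaller than $(\alpha_0/2)^2 e^{-2\sigma_0 t}$, so \eqref{eq:bt2} follows. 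The main obstacle I anticipate is the localized coercivity in the exterior setting: the cutoffs $\chi_k$ must be Dirichlet-compatible (vanishing near $\Theta$) while still allowing their commutators with the $\R^d$-linearized operator to be absorbed into the exponential error. The geometric separation guaranteed by $v_k\neq v_{k'}$ together with $T_0\gg 1$ is precisely what ensures the $\R^d$ spectral analysis transfers cleanly to $\Omega$, and is the source of the technical complications mentioned in the introduction.
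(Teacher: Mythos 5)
Your overall architecture (modulation with orthogonality conditions, a localized Weinstein-type functional combining energy with localized mass and momentum, coercivity of the linearized operator, and integration backwards from $T_n$ where $\varepsilon(T_n)=0$) matches the paper's strategy, but there is a genuine gap at the point where the bootstrap must close. Your claim that the commutator terms in $\dot{\mathcal H}$ coming from $\partial_t\chi_k$ and $\nabla\chi_k$ are bounded by $e^{-\kappa t}$ ``thanks to the geometric separation'' is not correct as stated: in the transition regions of the cutoffs, $u_n$ is only controlled through the bootstrap hypothesis \eqref{eq:bt1}, so these terms are of size $C\alpha_0^2 e^{-2\sigma_0 t}$, with $C$ independent of nothing you control. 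After integration and coercivity this gives $\|\varepsilon(t)\|_{H^1_0}^2\leq C\alpha_0^2 e^{-2\sigma_0 t}$, which has exactly the rate $2\sigma_0$ (it cannot be beaten by ``choosing $2\sigma_0<\kappa$'' or taking $T_0$ large, and in the paper $\sigma_0$ is in any case fixed by \eqref{eq:defSig0} in terms of the velocities and frequencies), and since $C$ may exceed $1/4$ this does not yield \eqref{eq:bt2}. The paper's resolution is precisely the ingredient your proposal lacks: the cutoffs vary on a large spatial scale $\Lambda$ (slabs $\uppsi((x_1-\lambda_k t)/\Lambda)$ in a rotated $x_1$-direction where the $v_{k,1}$ are distinct), so every dangerous term carries a factor $1/\Lambda$, giving $\|h\|_{H^1_0}^2\leq C(\alpha_0^2/\Lambda+1)e^{-2\sigma_0 t}$; the choice $\Lambda=\alpha_0^2$ then makes the constant $\alpha_0$-independent, and taking $\alpha_0$ large (e.g.\ $\alpha_0^2>32C$) closes the bootstrap. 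Without a large parameter of this type your scheme has no mechanism to gain the factor $1/2$.

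A second, related flaw is your choice of time-dependent coefficients $\widetilde\omega_k(t)$ in $\mathcal H$. Differentiating then produces terms $\dot{\widetilde\omega}_k(t)\int\chi_k|u_n|^2$ with $\int\chi_k|u_n|^2=O(1)$, and the modulation equations only give the linear bound $|\dot{\widetilde\omega}_k|\lesssim\|\varepsilon\|_{H^1_0}+e^{-\sigma_0 t}$ (compare \eqref{eq:der-mod-para}); your claimed quadratic bound $\|\varepsilon\|^2$ is not justified in this setting. A contribution of size $\alpha_0 e^{-\sigma_0 t}$ in $\dot{\mathcal H}$ destroys the rate after integration. The paper avoids this by using the \emph{fixed} coefficients $\omega_k+|v_k|^2/4$ in $\mathfrak J(t)$, so that $\mathcal G=E+\mathfrak J$ is almost conserved directly via Lemma \ref{lem:localMP} and energy conservation, and by recovering $|\widetilde\omega_k(t)-\omega_k|$ separately (Lemma \ref{lem:omega_k}) from the almost conservation of the localized mass together with the strict monotonicity of $\omega\mapsto\int Q_\omega^2$ in the subcritical case, rather than by integrating the ODE for $\dot{\widetilde\omega}_k$. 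Your treatment of the obstacle (cutoffs and $\Psi_k$ supported away from $\Theta$, transfer of the $\R^d$ coercivity) is consistent with the paper; the missing ideas are quantitative, not geometric.
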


Next, we verify that Proposition \ref{prop:uniform-estimate} follows from a standard continuity argument in $H^1_0$ combined with Proposition \ref{prop:bt}.  
Recall that the mapping $t \mapsto u_n(t) \in H^1_0(\Omega)$ is continuous for all $n \in \mathbb{N}$.  
Let $n_0, T_0,$ and $\alpha_0$ be as given in Proposition \ref{prop:bt}.  
Let $n \geq n_0$, since $u_n(T_n) = R(T_n)$, there exists a sufficiently small $\tau_1 > 0$ such that \eqref{eq:bt1} holds on the interval $[T_n - \tau_1, \, T_n]$. \\  
Define
\begin{align*}
  \bar{t}:= \inf\{ t \in [T_0,T_n], \; \text{ such that for all } \, s \in [t,T_n], \; \eqref{eq:bt1} \, \text{holds}  \}.  
\end{align*}\
We claim that $\bar{t} = T_0$.  Indeed, assume by contradiction that $\bar{t} > T_0$.  Then, by Proposition \ref{prop:bt}, the estimate \eqref{eq:bt2} holds on the interval $[\bar{t}, T_n]$.  
By continuity, there exists a sufficiently small $\tau_2 > 0$ such that \eqref{eq:bt2} also holds on $[\bar{t} - \tau_2, T_n]$, contradicting the definition of $\bar{t}$.  
Therefore, we must have $\bar{t} = T_0$.  Consequently, for all $n \geq n_0$, the uniform estimate \eqref{eq:uniform-estimate} is valid on $[T_0, T_n]$ with $C_0 = \alpha_0$.  
This completes the proof of Proposition \ref{prop:uniform-estimate}, under the assumption that Proposition \ref{prop:bt} holds.\\

\textbf{Proof of Proposition \ref{prop:bt}.} The rest of this section is devoted to the proof of Proposition \ref{prop:bt}. Before proceeding with the proof, observe that since $T_n \to +\infty$ as $n \to \infty$ and $T_0$ is fixed, Proposition \ref{prop:bt} can be viewed as a stability-type result for large time. In contrast, establishing Proposition~\ref{prop:bt} is more straightforward than proving a full stability result for multi-soliton solutions, see \cite{MR2228459}.  \\

  First, we claim the following result for the velocity vectors. 
\begin{clm}
  Let $(v_k)$ be $K$ vectors in $\mathbb{R}^d$ such that for any $k \neq k'$, $v_k \neq v_{k'}$.  
Then, there exists an orthonormal basis $(e_1, \ldots, e_d)$ of $\mathbb{R}^d$ such that for any $k \neq k'$,  
\[
(v_k, e_1) \neq (v_{k'}, e_1).
\]
  
\end{clm}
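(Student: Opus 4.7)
The claim is a genericity statement: I want to find a unit vector $e_1$ whose inner product with each difference $v_k - v_{k'}$, for $k \neq k'$, is nonzero. Then $(v_k,e_1) \neq (v_{k'},e_1)$ for all $k \neq k'$, and I can complete $e_1$ to an orthonormal basis $(e_1,\ldots,e_d)$ by Gram--Schmidt.

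The plan is to show that a generic $e_1 \in S^{d-1}$ works. For each pair $k \neq k'$ the set
\[
H_{k,k'} := \{\, e \in \mathbb{R}^d : (v_k - v_{k'}, e) = 0 \,\}
\]
is a hyperplane in $\mathbb{R}^d$ (since $v_k - v_{k'} \neq 0$ by assumption), hence $H_{k,k'} \cap S^{d-1}$ is either empty (if $d=1$, but in that case there is only one ``direction'' to consider and the claim is immediate after noting that the $v_k$ are distinct scalars) or a great $(d-2)$-sphere of $(d-1)$-dimensional Hausdorff measure zero. Since there are at most $\binom{K}{2}$ such pairs, the union
\[
\bigcup_{k \neq k'} \bigl( H_{k,k'} \cap S^{d-1} \bigr)
\]
is still of measure zero on $S^{d-1}$, and in particular is a proper subset. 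Pick any $e_1$ in its complement; this yields the desired unit vector.

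The remaining step is routine: apply Gram--Schmidt to extend $e_1$ to an orthonormal basis $(e_1, e_2, \ldots, e_d)$ of $\mathbb{R}^d$, which is always possible. By construction $(v_k - v_{k'}, e_1) \neq 0$, i.e. $(v_k, e_1) \neq (v_{k'}, e_1)$, for every $k \neq k'$.

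There is no substantive obstacle here; the only small matter of care is the handling of $d = 1$, which is trivial (the vectors $v_k$ are pairwise distinct real numbers, and $e_1 = 1$ does the job), and ensuring that when $d \geq 2$ one argues with $(d-1)$-dimensional surface measure on $S^{d-1}$ rather than Lebesgue measure on $\mathbb{R}^d$. Both are standard.
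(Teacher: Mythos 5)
Your argument is correct and is essentially the same as the paper's: both select $e_1$ by noting that for each pair $k \neq k'$ the set of unit vectors orthogonal to $v_k - v_{k'}$ has measure zero on the sphere, so the finite union avoids some unit vector, which is then completed to an orthonormal basis. Your extra remarks on the case $d=1$ and on using surface measure are fine but not needed beyond what the paper does.
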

\begin{proof}
    This is an elementary geometric property of $\mathbb{R}^d$.  
Let $k, k' \in \{1, \ldots, K\}$ with $k \neq k'$.  
The set of unit vectors $e_1 \in \mathbb{R}^d$ satisfying  
\[
\langle v_k - v_{k'}, e_1 \rangle = 0
\]  
has Lebesgue measure zero on the unit sphere.  
Therefore, we can choose a unit vector $e_1 \in \mathbb{R}^d$ such that for all $k \neq k'$,
\[
\langle v_k - v_{k'}, e_1 \rangle \neq 0.
\]
Finally, we complete $e_1$ to any orthonormal basis $(e_1, \ldots, e_K).$ 
\end{proof}
Since the \NNls  equation is invariant by rotation, we assume without any restriction, that the direction $e_1$ given in the Claim above is $x_1.$ Therefore, we may assume for any $k\neq k',$ $v_{k,1} \neq v_{k',1}.$ In fact we suppose that $$ v_{1,1}<v_{2,1}<\cdots< v_{K,1}.$$
 
Next, throughout this paper we define $\sigma_0 >0$ such that 
\begin{align}\label{eq:defSig0}
\sqrt{\sigma_0}:=\frac{1}{16} \min \left( v_{2,1}-v_{1,1}, \cdots, v_{k,1}-v_{K-1,1}, \sqrt{\omega_1},\cdots,  \sqrt{\omega_K}  \right). 
\end{align}

Let $\alpha_0>1$ and $T_0$ large enough to be determined later. Let $n_0>0$ be such that $T_{n_0}>T_0.$ In the following, we will
drop the index $n$ for most variables. Hence, we will write u for $u_n$, etc. \\

Assume that \eqref{eq:bt1} holds on $[\bar{t}, T_n]$ for some $\bar{t} \in [T_0,T_n],$ i.e., 
\begin{align*}
\forall t \in [\bar{t}, T_n], \; \left\| u_n(t) - R(t)\right\|_{H^1_0(\Omega)} \leq \alpha_0 e^{-\sigma_0 t } .        
\end{align*}

\subsection{Modulation final data} 
First, we use some modulation in the scaling, phase and translation parameters in the decomposition of the solution in a sum of solitary waves to obtain the orthogonality conditions. Next, we use a bootstrap argument
to control these parameters and some scalar product that are related to the size of the solitary waves.

\begin{lem}
\label{lem:modu-static}
There exist $C,\epsilon >0$ such that the following holds: For $k=1,\cdots,K,$ given $\alpha_k \in \R^d,$ such that  $ \min \{|\alpha_i-\alpha_j |, i \neq j \} \geq \frac{1}{\epsilon},$ $\omega_k \in \R, \theta_k \in \R $ and  $\mu_k \in \R,$  if $$ \left\| u-R \right\|_{H^1_0} \leq \epsilon, $$
then there exist $Y:=(y_k)_{1\leq k \leq K} \in (\R^d)^K,$  $\W=( \Modomega_k)_{1 \leq k \leq K} > 0,$ and 
$\Modmu:=(\Modmu_k)_{1 \leq k \leq K} \in (\R)^K,$ such that setting $$h(x)= u(x)-\tilde{R}(x),$$
the following hold,
\begin{align*}
  \left\|h \right\|_{H^1_0} + \sum_{k=1}^{K} \left| \Modmu_k - \mu_k \right|+ \sum_{k=1}^{K}  |y_k|+ \sum_{k=1}^{K}  | \Modomega_k-\omega_k|  \leq C \left\| u-R \right\|_{H^1_0},
\end{align*}
and \begin{align*}
&\re \int_{\Omega} \tilde{R}_k(x) \bar{h}(x) dx = 0, \qquad 
\im \int_{\Omega} \tilde{R}_k(x) \bar{h}(x) dx =0, \;  \\
& \re \int_{\Omega} \nabla( \tilde{Q}_{\omega_k}(x)) \Psi_k(x) e^{i \tilde{\varphi}_k(x)} \bar{h}(x) dx=0, \qquad \text{ for } k=1,\cdots, K,
\end{align*}
Where, \begin{align*}
&R(x)=\sum_{k=1}^K R_k(x), \quad R_k(x)=Q_{\omega_k}(x-\alpha_k)\Psi_k(x) e^{i \varphi_k(x)}, \text{with} \quad  \varphi_k(x):=\frac 12 (x\cdot v_k)+ \theta_k+\mu_k, \\ 
&\tilde{R}(x)=\sum_{k=1}^K \tilde{R}_k(x), \quad \tilde{R}_k(x)=\tilde{Q}_{\Modomega_k}(x) \Psi_k(x) e^{i \tilde{ \varphi_k  }(x) }, \text{with} \quad \tilde{ \varphi_k  }(x):=\frac 12 (x\cdot v_k)+ \theta_k+\Modmu_k ,\\
&\tilde{Q}_{\Modomega_k}(x):=Q_{\Modomega_k}(x-\alpha_k-y_k).
\end{align*}
\end{lem}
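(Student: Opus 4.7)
The plan is to apply the implicit function theorem to the $K(d+2)$-parameter family $(Y,\W,\Modmu)$, matching $K(d+2)$ orthogonality conditions. Concretely, on a neighborhood of the reference point $(Y_*,\W_*,\Modmu_*):=(0,(\omega_k)_k,(\mu_k)_k)$, I would define the map
\[
\Phi(Y,\W,\Modmu;u):=\bigl(\re\langle \tilde R_k,h\rangle,\ \im\langle \tilde R_k,h\rangle,\ \re\langle \nabla\tilde Q_{\Modomega_k}\Psi_k e^{i\tvarphi_k},h\rangle\bigr)_{k=1}^K,
\]
where $h=u-\tilde R$ depends on $(Y,\W,\Modmu)$ through $\tilde R$. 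At the reference point, taking $u=R$ gives $\tilde R=R$ and hence $h\equiv 0$, so $\Phi(Y_*,\W_*,\Modmu_*;R)=0$; the hypothesis $\|u-R\|_{H^1_0}\le\epsilon$ then forces $\Phi(Y_*,\W_*,\Modmu_*;u)$ to be of order $\epsilon$.

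Next, I would compute the partial Jacobian of $\Phi$ with respect to $(Y,\W,\Modmu)$ at $(Y_*,\W_*,\Modmu_*;R)$. Since $h=0$ at this point, only the $-\partial_\lambda\tilde R$ contributions survive, and using $\partial_{\Modmu_j}\tilde R_k=i\,\delta_{jk}\tilde R_k$, $\partial_{\Modomega_j}\tilde R_k=\delta_{jk}(\partial_\omega Q_\omega)|_{\Modomega_k}(\cdot-\alpha_k-y_k)\Psi_k e^{i\tvarphi_k}$, and $\partial_{y_j}\tilde R_k=-\delta_{jk}\nabla\tilde Q_{\Modomega_k}\Psi_k e^{i\tvarphi_k}$, the diagonal $k$-blocks reduce, modulo boundary-cutoff corrections that vanish like $e^{-c|\alpha_k|}$ because $Q_{\omega_k}$ is exponentially localized near $\alpha_k$ while $\Psi_k\equiv 1$ there, to
\[
\|Q_{\omega_k}\|_{L^2}^2,\qquad \tfrac12\,\partial_\omega\|Q_\omega\|_{L^2}^2\bigm|_{\omega_k},\qquad \tfrac1d\,\|\nabla Q_{\omega_k}\|_{L^2}^2\,I_d.
\]
All three are nonzero — the middle one is the Vakhitov–Kolokolov quantity, which in the $L^2$-subcritical regime $1<p<1+\tfrac4d$ is strictly positive (and the same coercivity statement \eqref{coerv-subcrit} invoked elsewhere is the underlying reason).

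The main obstacle is controlling the off-diagonal blocks: for $k\neq k'$ these take the form $\int \tilde R_k\,\overline{\tilde R_{k'}}\,dx$ (or gradient analogues), and they involve both the oscillatory factor $e^{i(\tvarphi_k-\tvarphi_{k'})}$ coming from the distinct Galilean phases $\tfrac12 x\cdot(v_k-v_{k'})$ and the spatial separation $|\alpha_k-\alpha_{k'}|\ge 1/\epsilon$. By the exponential decay of $Q_{\omega_k}$ the integrand is pointwise bounded by $e^{-c|\alpha_k-\alpha_{k'}|}$ on most of $\Omega$, so these cross-terms are $O(e^{-c/\epsilon})$ and the full Jacobian is a small perturbation of an invertible block-diagonal matrix. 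This is where the separation assumption $\min_{k\neq k'}|\alpha_k-\alpha_{k'}|\ge 1/\epsilon$ is essential.

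With the Jacobian invertible, the implicit function theorem produces unique smooth functions $(Y(u),\W(u),\Modmu(u))$ defined in a neighborhood of $u=R$ in $H^1_0$ such that the three orthogonality conditions hold. Taking derivatives of $\Phi(Y(u),\W(u),\Modmu(u);u)\equiv 0$ in $u$ at $u=R$ and inverting the Jacobian yields the Lipschitz bound $\sum_k(|y_k|+|\Modomega_k-\omega_k|+|\Modmu_k-\mu_k|)\lesssim \|u-R\|_{H^1_0}$, after which the triangle inequality $\|h\|_{H^1_0}\le \|u-R\|_{H^1_0}+\|\tilde R-R\|_{H^1_0}$ together with the smooth dependence of $\tilde R$ on its parameters gives the remaining estimate on $\|h\|_{H^1_0}$, concluding the lemma.
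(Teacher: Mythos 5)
Your proposal is correct and follows essentially the same route as the paper: both apply the implicit function theorem to the same map $\Phi$ built from the three orthogonality conditions, show the diagonal blocks of the Jacobian at $(0,(\omega_k)_k,(\mu_k)_k;R)$ are non-degenerate via $\|Q_{\omega_k}\|_{L^2}^2$, the positivity of $\frac{d}{d\omega}\int Q_\omega^2$ in the subcritical range, and $\|\nabla Q_{\omega_k}\|_{L^2}^2$, and treat the cutoff corrections and off-diagonal cross-terms as exponentially small thanks to the decay of $Q_{\omega_k}$ and the separation $|\alpha_k-\alpha_{k'}|\geq 1/\epsilon$. The concluding Lipschitz and $\|h\|_{H^1_0}$ estimates are obtained exactly as in the paper.
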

\begin{proof}
The proof follows from a standard application of the Implicit Function Theorem. For brevity, we only outline the main steps of the argument and omit the details, see \cite[Lemma 3.2]{Ou19} for more details. \\
Let
\begin{align*}\Phi: (0,\infty)^K \times (\R^d)^K \times \R^K \times H^1_0(\Omega)& \longrightarrow \R^K \times \R^K \times (\R^d)^K, \\ 
(\W=(\tomega_k)_{k} ,Y=(y_k)_{k},\tmu=(\tmu_k)_{k},u) &\longmapsto (\Phi^1,\Phi^2,\Phi^3),
\end{align*}
where
\begin{align*}
 \Phi^1:=(\Phi^1_k)_{1 \leq k \leq K },  \text{ with  } \Phi^1_k(\W,Y,\Modmu,u):&=\re \int_{\Omega} \tilde{R}_k(x) (\bar{u}(x)- \bar{\tilde{R}}(x))dx=    \re \int_{\Omega} \tilde{R}_k(x) \bar{h}(x)     dx        \\ 
  \Phi^2:=(\Phi^2_k)_{1 \leq k \leq K },    \text{ with  } \Phi^2_k(\W,Y,\Modmu,u):&= \re \int_{\Omega} \nabla( \tilde{Q}_{\Modomega_k}(x) )\Psi_k(x) e^{i \tilde{\varphi}_k(x)} (\bar{u}(x)-\bar{\tilde{R}}(x)) dx \\ 
&=  \re \int_{\Omega} \nabla( \tilde{Q}_{\Modomega_k}(x)) \Psi_k(x) e^{i \tilde{\varphi}_k(x)} \bar{h}(x) dx. \\
 \Phi^3:=(\Phi^3_k)_{1 \leq k \leq K },    \text{ with  }\Phi^3_k(\W,Y,\Modmu,u):&= \im \int_{\Omega} \tilde{R}_k(x)( \bar{u}(x)-\bar{\tilde{R}}(x)) dx = \im \int_{\Omega} \tilde{R}_k(x) \bar{h}(x) dx  .
  \end{align*}
Note that, we have $\Phi(W,0,\mu,R(\cdot))=0,$ for $W=(\omega_k)_{1 \leq k \leq K}, \mu:=(\mu_k)_{1 \leq k \leq K}$, and $R(x)=\sum_{k=1}^K R_k(x).$ The Lemma \ref{lem:modu-static} follows from the Implicit Function Theorem if we prove that $d_{\left(\W,Y,\Modmu \right)}\Phi(W,0,\mu,R(\cdot))$ is invertible. For that, we compute the derivatives of $\Phi^1_k, \; \Phi^2_k$ and $\Phi^3_k$ with respect to each $(\omega_k,y_k,\mu_k).$ Note that $h(W,0,\mu,R(\cdot))=0$ and we have
\begin{align*}
  \frac{\partial h}{\partial{\tomega_k}}(W,0,\mu,R(\cdot))&= - \frac{\partial Q_{\omega}}{\partial \omega} \bigg|_{\omega=\omega_k}(x-\alpha_k) \Psi_k(x) e^{ i \varphi_k(x)} , \\
    \frac{\partial h}{\partial{y_k}}(W,0,\mu,R(\cdot)) &=  \nabla Q_{\omega}(x-\alpha_k) \Psi_k(x) e^{ i \varphi_k(x)} , \\
      \frac{\partial h}{\partial{\tmu_k}}(W,0,\mu,R(\cdot)) &= - i  Q_{\omega}(x-\alpha_k) \Psi_k(x) e^{ i \varphi_k(x)} .
\end{align*}

Therefore, we have 
\begin{align*}
\frac{\partial}{\partial \omega_k} \Phi^1_j(W,0,\mu,R)&=- \re \int_{\Omega} Q_{\omega_j}(x-\alpha_j) \Psi_j(x)  e^{i \varphi_j(x)} \frac{\partial Q_{\omega}}{\partial \omega}  \big\vert_{\omega=\omega_k} (x-\alpha_k)  \Psi_k(x) e^{-i \varphi_k(x)} dx. \\
\frac{\partial}{\partial y_k} \Phi^1_j(W,0,\mu,R)&=- \re \int_{\Omega} Q_{\omega_j}(x-\alpha_j) \Psi_j(x)  e^{i \varphi_j(x)}  \nabla Q_{\omega_k}  (x-\alpha_k)  \Psi_k(x) e^{-i \varphi_k(x)} dx. \\
\frac{\partial}{\partial \mu_k} \Phi^1_j(W,0,\mu,R)&=- \im \int_{\Omega} Q_{\omega_j}(x-\alpha_j) \Psi_j(x)  e^{i \varphi_j(x)}   Q_{\omega_k}  (x-\alpha_k)  \Psi_k(x) e^{-i \varphi_k(x)} dx.
\end{align*}
Similar formulas also hold for $\Phi^2_j$ and $\Phi^3_j.$ We first consider the case $k=j$. Using the fact that $ \frac{d }{d\omega} \int Q^2_{\omega}  dx \big\vert_{\omega=\omega_k} >0,$ together with the compact support of $\Psi_k^2-1$ and the exponential decay of $Q_\omega,$ we obtain $\frac{\partial}{\partial \omega_k} \Phi^1_k(W,0,\mu,R) = - a_k + O(e^{-2 \sigma_0 |\alpha_k|}),$ for some $a_k>0.$ Since $Q_{\omega}$ is even function then we have  $\frac{\partial}{\partial y_k} \Phi^1_k(W,0,\mu,R)=O(e^{-2 \sigma_0 |\alpha_k|}).$ Moreover, as $Q_{\omega}$ is real valued function, it follows that  
$\frac{\partial}{\partial \mu_k} \Phi^1_k(W,0,\mu,R)=0.$
Similarly, we obtain for $\Phi^2_k$ and $\Phi^3_k$
\begin{align*}
\frac{\partial}{\partial \omega_k} \Phi^2_k(W,0,\mu,R) &=  O(e^{-2 \sigma_0 |\alpha_k|})    , \quad \quad \quad   \frac{\partial}{\partial \omega_k} \Phi^3_k(W,0,\mu,R) =  O(e^{-2 \sigma_0 |\alpha_k|}), \\
\frac{\partial}{\partial y_k} \Phi^2_k(W,0,\mu,R)&=b_k+ O(e^{-2 \sigma_0 |\alpha_k|}), \quad \frac{\partial}{\partial y_k} \Phi^3_k(W,0,\mu,R)= O(e^{-2 \sigma_0 |\alpha_k|}),  \\
\frac{\partial}{\partial \mu_k} \Phi^2_k(W,0,\mu,R)&=  O(e^{-2 \sigma_0 |\alpha_k|}), \quad  \quad \quad \frac{\partial}{\partial \mu_k} \Phi^3_k(W,0,\mu,R)=c_k+  O(e^{-2 \sigma_0 |\alpha_k|}).
\end{align*}
For the case $k \neq j ,$ we have 
\begin{align*}
 | \frac{\partial}{\partial \omega_k} \Phi^l_j(W,0,\mu,R)  | +  |\frac{\partial}{\partial y_k} \Phi^l_j(W,0,\mu,R)| + |\frac{\partial}{\partial \mu_k} \Phi^l_j(W,0,\mu,R)| \leq C e^{-2\sigma_0 |\alpha_j - \alpha_k|}.
\end{align*}
Thus, the Jacobian of $\Phi=( (\Phi_k^1)_{(1\leq k \leq K)},(\Phi_k^2)_{(1\leq k \leq K)}, (\Phi_k^3)_{(1\leq k \leq K)}) $ at the point $(W,0,\mu,R)$ is not zero and we have $\Phi(W,0,\mu,R(\cdot))=0.$  Then one can apply the implicit function theorem for $u$ in the ball of radius $\varepsilon$  and center $R(x)$ to obtain the existence of the parameters $\widetilde{W}=(\tomega_k)_{1\leq k \leq K} ,Y=(y_k)_{1\leq k \leq K},\tmu=(\tmu_k)_{1\leq k \leq K}$ such that $\Phi(\widetilde{W},Y,\tmu,u)=0$, i.e., there exists $\varepsilon_0 > 0 ,\;  \varepsilon_0 \leq \eta $ and a $C^1$-function 
\begin{align*}
g : B_{H^1_0}(R(\cdot),&\varepsilon ) \longrightarrow B_{R^{K(d+2)}}( (W,0,\mu),\eta) \\
             &u \longmapsto g(u)=\left( \widetilde{W}(u),Y(u),\tmu(u) \right)
\end{align*}
such that $\Phi(\widetilde{W},Y,\tmu)=0$ in $B_{H^1_0}(R(\cdot),\varepsilon ) \times B_{R^{K(d+2)}}( (W,0,\mu),\eta)$ is equivalent to $\left( \widetilde{W}(u),Y(u),\tmu(u) \right)=g(u).$ This concludes the proof of Lemma \ref{lem:modu-static}.
\end{proof}

Note that the previous lemma applies to time independent functions. A consequence of this modulation in the decomposition of fixed $u$ is the following result for the solution $u(t)$ of~$\eqref{eq:defun}$ and satisfying the assumption of Proposition \ref{prop:bt}, in particular \eqref{eq:bt1} for some $\bar{t} \in [T_0,T_n].$

 \begin{corll}
 \label{Cor:Mod-u(t)}
 There exists $C_1 >0$ such that the following holds for all $t \in [\bar{t},T_n]$, for $\bar{t}>T_0$ large, if $u(t,\cdot) \in H^1_0$ satisfies \begin{equation*}
 \label{mod-u(t)-R(t)-L^2 }
 \left\| u(t) - R(t) \right\|_{H^1_0} \leq \alpha_0 e^{-\sigma_0 t } . 
  \end{equation*} 
    Then for $k=1,\ldots, K$ there exits a $C^1$-functions $\tomega_k: [\bar{t},T_n] \longrightarrow (0,\infty) \; ,y_k:[\bar{t},T_n] \longrightarrow \R^d $ and $\tmu_k : [\bar{t},T_n] \longrightarrow \R$ such that if we set  $$h(t,x) = u(t,x)-\widetilde{R}(t,x)  , $$
the following holds for all $t \in [\bar{t},T_n],$
\begin{equation} \label{eq:mod-para}
    \left\|h(t)\right\|_{H^1_0}+ \sum_{k=1}^{K} \left| \Modmu_k (t)- \mu_k \right|+ \sum_{k=1}^{K} |y_k(t)|+\sum_{k=1}^{K} | \Modomega_k(t)-\omega_k| \leq C_1 \left\|u(t)-R(t) \right\|_{L^2},
\end{equation}
Moreover, for all $k=1,\cdots,K$ we have
\begin{equation}
  \label{eq:der-mod-para} 
\left| \frac{d\tomega_k(t) }{dt}  \right|^2 +\left|  \frac{dy_k(t)}{dt}   \right|^2 + \left| \frac{d \tmu_k(t)}{dt} -(\tomega_k(t)-\omega_k) \right|^2 
\leq C_1 \left\| h(t) \right\|_{H^1_0}^2 +  C_1 e^{-2 \sigma_0 t}
\end{equation}       
    
    and
    \begin{align}
\label{eq:orthoC1}
  \re &\int \bar{h}(t,x)  \nabla ( {\widetilde{Q}}_{\tomega_k(t)}(t,x)) \Psi_k(x) e^{i \tvarphi_k(t,x)} dx= 0 ,\; \,     \\ 
  \label{eq:orthoC2} 
  \re &\int \bar{h}(t,x) \widetilde{R}(t,x) dx =0, \\ 
  \label{eq:orthoC3}
  \im& \int \bar{h}(t,x)  \widetilde{R}(t,x) dx  = 0,
    \end{align}

    where, \begin{align*}
&R(t,x)=\sum_{k=1}^K R_k(t,x), \quad R_k(t,x)=Q_{\omega_k}(x-\alpha_k(t))\Psi_k(x) e^{i \varphi_k(t,x)}, \text{with} \quad  \varphi_k(t,x):=\frac 12 (x\cdot v_k)+ \theta_k(t)+\mu_k, \\ 
&\tilde{R}(t,x)=\sum_{k=1}^K \tilde{R}_k(t,x), \quad \tilde{R}_k(t,x)=\tilde{Q}_{\Modomega_k(t)}(t,x) \Psi(x) e^{i \tilde{ \varphi_k  }(t,x) }, \text{with} \quad \tilde{ \varphi_k  }(t,x):=\frac 12 (x\cdot v_k)+ \theta_k(t)+\Modmu_k(t) , \\
&\tilde{Q}_{\Modomega_k}(t,x):=Q_{\Modomega_k}(x-\alpha_k(t)-y_k(t)), \; 
\alpha_k(t)=x_k^0+ tv_k, \; \theta_k(t)= -\frac{1}{4} |v_k|^2 t + \omega_k t.
\end{align*}
 \end{corll}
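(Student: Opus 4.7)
The plan is to deduce the corollary from the static modulation result of Lemma \ref{lem:modu-static} applied pointwise in time, and then to obtain the ODE estimate \eqref{eq:der-mod-para} by differentiating the orthogonality conditions in $t$ and inverting a near-diagonal linear system.

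First, for each fixed $t\in[\bar t,T_n]$, the assumption \eqref{eq:bt1} gives $\|u(t)-R(t)\|_{H^1_0}\le\alpha_0 e^{-\sigma_0 t}$, which is smaller than $\varepsilon$ provided $T_0$ is chosen large enough. The centers $\alpha_k(t)=x_k^0+tv_k$ satisfy $\min_{k\neq k'}|\alpha_k(t)-\alpha_{k'}(t)|\geq ct$ by the distinctness of the velocities, so for $T_0$ large the separation condition $\min|\alpha_k-\alpha_{k'}|\ge 1/\varepsilon$ holds. Applying Lemma \ref{lem:modu-static} at each $t$ with $\alpha_k=\alpha_k(t)$, $\theta_k=\theta_k(t)$ produces parameters $(\tilde\omega_k(t),y_k(t),\tilde\mu_k(t))$ satisfying the three orthogonality relations \eqref{eq:orthoC1}--\eqref{eq:orthoC3} and the bound \eqref{eq:mod-para}. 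Since $t\mapsto u(t)$ is continuous from $[\bar t,T_n]$ into $H^1_0(\Omega)$ and $C^1$ into $H^{-1}(\Omega)$ via the equation, the $C^1$ implicit function theorem used inside Lemma \ref{lem:modu-static} delivers $C^1$ dependence of the parameters on $t$.

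To establish \eqref{eq:der-mod-para}, I would differentiate each of \eqref{eq:orthoC1}--\eqref{eq:orthoC3} in $t$. The equation satisfied by $h$ is
\begin{equation*}
i\partial_t h+\Delta_\Omega h = -|u|^{p-1}u+|\tilde R|^{p-1}\tilde R - E(\tilde R)-i\sum_{k}\Big(\dot{\tilde\omega}_k\,\partial_\omega \tilde Q_{\tilde\omega_k}-\dot y_k\cdot\nabla\tilde Q_{\tilde\omega_k}+i(\dot{\tilde\mu}_k-(\tilde\omega_k-\omega_k))\,\tilde Q_{\tilde\omega_k}\Big)\Psi_k e^{i\tilde\varphi_k},
\end{equation*}
where $E(\tilde R)$ collects the error produced because each $\tilde R_k$ is only an approximate solitary wave on $\Omega$ (the cut-off $\Psi_k$ and the fact that $\tilde\omega_k\neq\omega_k$ generate a remainder supported near $\partial\Omega$ or controlled by $|\tilde\omega_k-\omega_k|$). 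Substituting this identity in the time-differentiated orthogonality conditions, each relation becomes a linear combination of $\dot{\tilde\omega}_j,\dot y_j,\dot{\tilde\mu}_j-(\tilde\omega_j-\omega_j)$ on one side, plus terms involving $h$ and $E(\tilde R)$ on the other.

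The coefficient matrix of this $K(d+2)\times K(d+2)$ system is, up to $O(e^{-2\sqrt{\sigma_0}\,t})$ corrections from soliton interactions and obstacle contributions, block-diagonal with diagonal blocks equal (up to sign) to those computed in Lemma \ref{lem:modu-static}. Hence it is invertible for $T_0$ large, with uniformly bounded inverse. The source terms are then estimated by pairing the equation against the directions $\tilde R_k$ and $\nabla\tilde Q_{\tilde\omega_k}\Psi_k e^{i\tilde\varphi_k}$: the linear part in $h$ yields an $O(\|h\|_{H^1_0})$ contribution via Cauchy--Schwarz together with the exponential decay of $\tilde Q_{\tilde\omega_k}$, the nonlinear difference $|u|^{p-1}u-|\tilde R|^{p-1}\tilde R$ is $O(\|h\|_{H^1_0})$ by a Sobolev/H\"older estimate, and $E(\tilde R)$ is $O(e^{-\sigma_0 t})$ thanks to the support properties of $1-\Psi_k$ near $\Theta$, the exponential decay of $Q_{\tilde\omega_k}(\cdot-\alpha_k-y_k)$ away from the center, and the fact that the soliton centers move to infinity at linear speed. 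Inverting the matrix produces the estimate \eqref{eq:der-mod-para}.

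The main obstacle will be controlling the $E(\tilde R)$ terms and the off-diagonal interactions: one must verify that the boundary/cutoff error produced by $\Psi_k$ being non-constant near $\Theta$, evaluated on the soliton $Q_{\tilde\omega_k}(\cdot-\alpha_k(t)-y_k(t))$ whose center moves to infinity, is $O(e^{-\sigma_0 t})$, and that cross interactions between distinct solitons decay like $e^{-2\sqrt{\sigma_0}|\alpha_k(t)-\alpha_{k'}(t)|}$, which is ensured by the choice of $\sigma_0$ in \eqref{eq:defSig0}. Once these exponentially small remainders are accounted for, the structure of the near-diagonal system is identical to the Euclidean case treated in \cite{MartelMerle06}, and \eqref{eq:der-mod-para} follows.
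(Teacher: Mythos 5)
Your proposal is correct and follows essentially the same route as the paper: apply the static modulation lemma (Lemma \ref{lem:modu-static}) at each time $t$ using the separation of the centers $\alpha_k(t)$ for $T_0$ large to get the orthogonality conditions and \eqref{eq:mod-para}, obtain $C^1$ regularity of the parameters from the implicit function theorem together with a regularization argument, and then derive \eqref{eq:der-mod-para} by differentiating the orthogonality relations, using the equation satisfied by $h$ (with its exponentially small cutoff/interaction errors and $O(\|h\|_{H^1_0})$ remainders), and inverting the resulting near-diagonal linear system in the parameter derivatives. This matches the paper's argument in both structure and the estimates invoked.
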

Note that by $u(T_n)=R(T_n),$ and uniqueness of the decomposition at $t=T_n,$ we have 
\begin{align}
    h(T_n)\equiv 0 \quad \TR(T_n) \equiv R(T_n), \quad \tomega_k(T_n) = \omega_k, \quad y_k(T_n)=0, \quad \tmu_k(T_n)=\mu_k.
\end{align}

\begin{proof}

Recall that we assume \eqref{eq:bt1} holds on $[\bar{t}, T_n]$ for some $\bar{t} \in [T_0,T_n],$ therefore we have $ \left\| u(t) - R(t) \right\|_{H^1_0} \leq \alpha_0 e^{-\sigma_0 t }.$ Applying Lemma \ref{lem:modu-static}, for  $t\in [\bar{t}, T_n]$ and since the map $t \mapsto u(t)$ is continuous in $H^1_0(\Omega),$ we obtain the existence of the continuous function $\tomega_k,\tmu_k$ and $y_k$ on $[\bar{t}, T_n],$ such that \eqref{eq:orthoC1},\eqref{eq:orthoC2} and \eqref{eq:orthoC3} hold. To prove that the functions $\tomega_k,\tmu_k$ and $y_k$ are $C^1$-functions, we use a standard regularization arguments and computation based on the equations of $h$, see \cite{MartelFrank01} for details. We only give the the equation  
of $h(t)$ to justify formally \eqref{eq:mod-para}. It is straightforward to check that $h(t)$ satisfies the equation 
\begin{align} \label{eq:h}
\begin{split}
  i \partial_t h + \mathcal{L} h& =  i \sum_{k=1}^K \bigg( \frac{dy_k(t)}{dt} \cdot \nabla \tQ_{\tomega_k(t)} \Psi_k + ( \frac{d\mu_k(t)}{dt} - ( \tomega_k (t) - \omega_k)  ) \tQ_{\tomega_k(t)} \Psi_k   - i\frac{d \tomega_k(t)}{dt } \frac{d}{d\omega_k} \tQ_{\omega_k(t)} \Psi_k  \bigg) e^{ i \tvarphi_k(t,x)} \\
&- \sum_{k=1}^K \bigg( 2 \nabla \tQ_{\tomega_k(t)}    \nabla \Psi_k  + \tQ_{\tomega_k(t)}  \Delta \Psi_k(x) + i v_k \tQ_{\omega_k(t)} \nabla \Psi_k + \tQ_{\tomega_k(t)}^p \Psi_k(\Psi_k^{p-1} -1)  \bigg)e^{ i \tvarphi_k(t,x)} \\ &+ \alpha(t,x) + \beta(t,x) 
\end{split}
\end{align}
where $\alpha(t,x)$ decay exponentially and $\beta(t,x)$ is a remainder terms on $h$ and 
\begin{align*}
  \mathcal{L}h:=  \Delta h  +  \sum_{k=1}^K   |\TR_k|^{p-1} h + (p-1) | \TR_k|^{p-2} \re( \TR_k \bar{h})
\end{align*}
Using the orthogonality condition \eqref{eq:orthoC1}, \eqref{eq:orthoC2} and \eqref{eq:orthoC3}, we obtain
\begin{align*}
 \im \int \partial_t \overline{h}(t,x) \widetilde{Q}_{\tomega_k}(t,x) \Psi_k(x) dx  & = \displaystyle  \im \int \overline{h}(t,x)  (v_k+\frac{d y_{k}}{dt}(t)) \cdot \nabla \widetilde{Q}_{\tomega_k}(t,x) \Psi_k(x) dx \\
 &- \im \int \overline{h}(t,x)  \frac{d\tomega_k(t)}{dt} \frac{\partial}{\partial \omega_k} \widetilde{Q}_{\omega_k}\big|_{\omega=\tomega_k(t)}(t,x) \Psi_k(x) dx 
 \end{align*}
 
\begin{align*}
\re \int  \partial_t \overline{ h}(t,x)    \widetilde{Q}_{\tomega_k}(t,x)  \, \Psi_k(x) dx &=  \re \int \overline{h}(t,x) (v_{k}+\frac{d y_{k} }{dt}(t))  \cdot \nabla \widetilde{Q}_{\tomega_k} (t,x) \Psi_k(x) dx \\
&- \re \int \overline{h}(t,x)  \frac{d\tomega_k(t)}{dt} \frac{\partial}{\partial \omega_k} \widetilde{Q}_{\omega_k}\big|_{\omega_k=\tomega_k(t)}   \Psi_k(x) dx 
\end{align*}
 
 \begin{align*}
\re \int  \partial_t \overline{ h}(t,x)  \nabla  (\widetilde{Q}_{\tomega_k}(t,x) ) \, \Psi_k(x) dx &=  \re \int \overline{h}(t,x) (v_{k}+\frac{d y_{k} }{dt}(t)) \cdot \nabla(  \nabla (\widetilde{Q}_{\tomega_k} (t,x) )\Psi_k(x)  dx \\
&- \re \int \overline{h}(t,x)  \frac{d\tomega_k(t)}{dt} \frac{\partial}{\partial \omega_k} \nabla (\widetilde{Q}_{\omega_k}\big|_{\omega_k=\tomega_k(t)}(t,x) \Psi_k(x)) dx .
\end{align*}

From the above estimates and the equation \eqref{eq:h} of $h,$ it is straightforward to obtain \eqref{eq:mod-para} by taking the scalar products by $\nabla Q_{\tomega_k(t)} \Psi_k $ and $Q_{\tomega_k(t)} \Psi_k . $ This concludes the proof of Corollary\,\ref{Cor:Mod-u(t)}.
\end{proof}

\subsection{Control of local quantities}
Define a cut-off functions adapted to the solution $u(t).$ Let $\uppsi(x)$ be a smooth function on $\R$ such that 
\begin{align*}
  0 \leq \uppsi \leq 1 , \quad   \uppsi(x)=0, \; \text{for } x \leq -1, \quad \uppsi(x)=1,   \; \text{for } x > 1, \quad \uppsi^{\prime} \geq 0 , \; \text{on } \R 
\end{align*}
and satisfying, for some constants $C>0,$
\begin{align*}
   (\uppsi^{\prime}(x))^2 \leq C \, \uppsi(x), \quad (\uppsi^{\prime \prime}(x))^2 \leq C \,\uppsi^{\prime}(x), \quad \text{for all } x \in \R 
\end{align*}

For $k=2, \cdots, K,$ let $\lambda_k=\frac{1}{2}(v_{k-1,1}+v_{k,1}).$ Let $\Lambda>0$ large enough to be determined later and define 

\begin{align*}
&   \varphi_k(t,x) = \uppsi \bigg( \frac{x_1 - \lambda_k t}{\Lambda}  \bigg)- \uppsi \bigg( \frac{x_1 - \lambda_{k+1} t}{\Lambda}  \bigg), \text{ for } \; k=2, \cdots , K-1,\\
&     \varphi_1(t,x) =1-  \uppsi \bigg( \frac{x_1 - \lambda_2 t}{\Lambda}  \bigg), \quad   \varphi_K(t,x) = \uppsi \bigg( 
     \frac{x_1 - \lambda_K t}{\Lambda}  \bigg)
\end{align*}

Denote by $\mathcal{M}_k(t)= \int |u(t,x)|^2 \varphi_k(t,x) dx.$ and $\mathcal{P}_k(t)= \im \int \nabla u \, \bar{u} \varphi_k(t,x) dx .$

\begin{lem} \label{lem:localMP}
Let $\Lambda>0,$ then there exists $C>0,$ such that if $\Lambda$ and $T_0$ are large enough, then for all $k=2,\cdots,K,$ and for all $t \in [\bar{t},T_n],$
\begin{align} \label{eq:M_k}
  | \mathcal{M}_k(T_n) -\mathcal{M}_k(t ) | + | \mathcal{P}_k(T_n) -\mathcal{P}_k(t ) |  \leq C\frac{\alpha_0^2}{\Lambda} e^{-2 \sigma_0 t}.    
\end{align}
    
\end{lem}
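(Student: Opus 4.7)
\textbf{Proof plan for Lemma \ref{lem:localMP}.} The plan is to differentiate $\mathcal{M}_k$ and $\mathcal{P}_k$ in time, bound the derivatives pointwise in $t$, and integrate on $[t,T_n]$. For the mass, the standard computation from $i\partial_t u+\Delta u=-|u|^{p-1}u$ yields
\[
\frac{d}{dt}\mathcal{M}_k(t)=\int_{\Omega}|u|^2\,\partial_t\varphi_k\,dx+2\,\im\int_{\Omega}(\nabla u\cdot\bar u)\,\nabla\varphi_k\,dx,
\]
and an analogous (vector-valued) identity for $\mathcal{P}_k$ in which every integrand carries one of $\partial_t\varphi_k,\nabla\varphi_k,\Delta\varphi_k$ as a factor — the nonlinear term contributes $|u|^{p+1}\nabla\varphi_k$, while a double integration by parts in the kinetic term brings in $\partial_ju\,\partial_\ell\bar u\,\nabla\varphi_k$ and $|u|^2\,\Delta\varphi_k$. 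The $\partial\Omega$ boundary terms vanish because, for $T_0$ large enough, the supports of $\varphi_k$ and its derivatives lie far from the compact obstacle $\Theta$.

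The decisive observation is that each of $\partial_t\varphi_k,\nabla\varphi_k,\Delta\varphi_k$ has $L^\infty$-norm of size $O(1/\Lambda)$ and is supported in the two strips $S_\ell(t):=\{|x_1-\lambda_\ell t|\leq\Lambda\}$ for $\ell=k,k+1$. By the choice of $\sigma_0$ in \eqref{eq:defSig0}, each $\lambda_\ell$ is separated from every velocity $v_{j,1}$ by at least $8\sqrt{\sigma_0}$ and $\sqrt{\omega_j}\geq 16\sqrt{\sigma_0}$, so on these strips the pointwise bound $|R_j(t,x)|\lesssim e^{-\frac{\sqrt{\omega_j}}{2}|x-v_j t|}$ together with $|x_1-v_{j,1}t|\geq 8\sqrt{\sigma_0}\,t-\Lambda$ gives $|R|\lesssim e^{-c_1 t}$ with $c_1\gg 2\sigma_0$, once $T_0$ is chosen large (depending on $\Lambda$) so that the factor $e^{\sqrt{\omega_j}\Lambda/2}$ is absorbed. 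Writing $u=R+h$ with $\|h\|_{H^1_0}\leq\alpha_0 e^{-\sigma_0 t}$ from \eqref{eq:bt1}, each integrand splits into three kinds of pieces: (i) pure-$R$ terms, bounded by $e^{-c_1 t}$; (ii) $R$--$h$ cross terms, controlled by Cauchy--Schwarz as $\|R\,\mathbbm{1}_{S_\ell}\|_{L^2}\|h\|_{L^2}\lesssim e^{-c_1 t}\alpha_0 e^{-\sigma_0 t}$; (iii) pure-$h$ terms, bounded by $(C/\Lambda)\|h\|_{H^1_0}^2\leq(C\alpha_0^2/\Lambda)e^{-2\sigma_0 t}$. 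The subcritical nonlinear pieces such as $\int|h|^{p+1}|\nabla\varphi_k|$ and $\int|R|^{p-1}|h|^2|\nabla\varphi_k|$ are handled identically, thanks to Sobolev embedding and the uniform $H^1_0$-bound from Proposition~\ref{prop:uniform-estimate}. Summing gives
\[
\left|\tfrac{d}{dt}\mathcal{M}_k(t)\right|+\left|\tfrac{d}{dt}\mathcal{P}_k(t)\right|\leq\tfrac{C\alpha_0^2}{\Lambda}\,e^{-2\sigma_0 t}+C\,e^{-c_1 t},\qquad c_1>2\sigma_0.
\]
Integrating on $[t,T_n]$ and absorbing the second contribution by choosing $T_0$ large enough so that $\Lambda\,e^{-(c_1-2\sigma_0)T_0}\lesssim\alpha_0^2$ yields \eqref{eq:M_k}.

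The main delicate point will be the bookkeeping for $\mathcal{P}_k$: since it is vector-valued and its time derivative invokes a double integration by parts that redistributes the Laplacian, one has to organize the many resulting pieces (kinetic, nonlinear, and from $\Delta\varphi_k$) into the three categories above. No ingredient beyond careful computation, the pointwise decay bound for $R$ on the strips $S_\ell$, and the bootstrap hypothesis \eqref{eq:bt1} is needed; the gain of a factor $1/\Lambda$ comes entirely from the $L^\infty$-size of the derivatives of $\varphi_k$, which is what allows the final constant to be taken small by choosing $\Lambda$ large at the end.
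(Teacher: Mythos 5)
Your proposal follows essentially the same route as the paper's proof: differentiate $\mathcal{M}_k$ and $\mathcal{P}_k$, gain the factor $1/\Lambda$ from the derivatives of the cutoff, exploit that these derivatives are supported on strips between the soliton trajectories where $R$ is exponentially small (by the choice of $\sigma_0$ and taking $T_0$ large relative to $\Lambda$), control the nonlinear term by a localized Sobolev inequality, insert $u=R+(u-R)$ with the bootstrap bound \eqref{eq:bt1}, and integrate on $[t,T_n]$. One minor caveat: the uniform $H^1_0$ bound used for the nonlinear pieces should be taken from the bootstrap hypothesis \eqref{eq:bt1} together with the boundedness of $R$, not from Proposition \ref{prop:uniform-estimate}, since the latter is what the bootstrap argument is ultimately establishing.
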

\begin{proof}
We have 
\begin{align*}
\frac{1}{2} \frac{d}{dt} \int |u|^2 \uppsi\left(\frac{x_1-\lambda_k t}{\Lambda} \right)= \frac{1}{\Lambda}
\im \int \partial_{x_1}u \, \bar{u} \uppsi^{\prime}\left(\frac{x_1-\lambda_k t}{\Lambda} \right) - \frac{\lambda_k}{2 \Lambda} \int |u|^2 \uppsi^{\prime}\left(\frac{x_1-\lambda_k t}{\Lambda} \right) .
\end{align*}
Using the properties of the support of $\uppsi_k,$ we obtain 
\begin{align}
\label{eq:dtuloc}
   \left| \frac{d}{dt} \int |u|^2  \uppsi\left(\frac{x_1-\lambda_k t}{\Lambda} \right)  \right| \leq \frac{C}{\Lambda} \int_{\Omega_1} |\partial_{x_1} u |^2 +|u|^2 dx,
\end{align}
where $\Omega_1:=(-L+\lambda_kt, L+\lambda_k t)\times \R^{d-1} \cap \Omega.$ \\

Similarly, we have 
\begin{align*}
 \frac{1}{2} \frac{d}{dt} \im \int \partial_{x_1} u \bar{u}    \uppsi\left(\frac{x_1-\lambda_k t}{\Lambda} \right)= \frac{1}{\Lambda} \int \left( |\partial_{x_1} u |^2 - \frac{p-1}{2(p+1) |u|^{p+1}} \uppsi^{\prime} \left(\frac{x_1-\lambda_k t}{\Lambda} \right)  \right) \\
 - \frac{1}{4 \Lambda^3} \int |u|^2 \uppsi^{'''}\left(\frac{x_1-\lambda_k t}{\Lambda} \right) - \frac{\lambda_k}{2 \Lambda} \im \int \partial_{x_1} u \bar{u} \uppsi^{\prime} \left(\frac{x_1-\lambda_k t}{\Lambda} \right).
\end{align*}
Thus we obtain 
\begin{align*}
   \left| \frac{d}{dt} \im \int \partial_{x_1} u \bar{u} \uppsi\left(\frac{x_1-\lambda_k t}{\Lambda} \right) \right| \leq \frac{C}{\Lambda} \int_{\Omega_1} |\nabla u |^2 + |u|^2 + |u|^{p+1} dx
\end{align*}
Using Sobolev inequality to $u(x) f(x_1-\lambda_k t),$ where $f$ is a $C^1(\R)$ function such that $f(x_1)=1$ for $|x_1|<\Lambda$ and $h(x_1)=0$ for $|x_1|>\Lambda+1,$ we have 

\begin{align*}
   \int_{\Omega_1} |u|^{p+1} \leq C \left( \int_{\Omega_2} |\nabla u|^2 + |u|^2 dx \right)^{\frac{p+1}{2}} ,  
\end{align*}
where $\Omega_2:= (-(L+1)+\lambda_k, (L+1)+\lambda_k) \times \R^{d-1} \cap \Omega.$
Thus, we get 
\begin{align}
\label{eq:dtP1loc}
   \left| \frac{d}{dt} \im \int \partial_{x_1} u \bar{u} \uppsi\left(\frac{x_1-\lambda_k t}{\Lambda} \right) \right| \leq \frac{C}{\Lambda} \int_{\Omega_2} |\nabla u |^2 + |u|^2  dx+ \frac{C}{\Lambda} \left( \int_{\Omega_2} |\nabla u|^2 + |u|^2 dx \right)^{\frac{p+1}{2}}.
\end{align}

Now, for $j=2,\cdots,d,$ we have 
\begin{align*}
    \frac{1}{2} \frac{d}{dt} \im \int \partial_{x_j} u \, \bar{u} \uppsi \left(\frac{x_1-\lambda_k t}{\Lambda} \right) dx = \frac{1}{\Lambda} \re \int \partial_{x_j} u \partial_{x_1} \bar{u} \uppsi^{\prime} \left(\frac{x_1-\lambda_k t}{\Lambda} \right) + \frac{\lambda_k}{2 \Lambda} \im \int \partial_{x_j} u \bar{u} \uppsi^{\prime} \left(\frac{x_1-\lambda_k t}{\Lambda} \right),
\end{align*}
which yields, 
\begin{align}\label{eq:dtPjloc}
\left| \frac{d}{dt} \im \int \partial_{x_j} u \, \bar{u}  \uppsi \left(\frac{x_1-\lambda_k t}{\Lambda} \right) \right|    \leq \frac{C}{\Lambda} \int_{\Omega_1} |\nabla u|^2 + |u|^2 .
\end{align}
Next, by writing $u(t)=R(t)+u(t)-R(t),$ and using \eqref{eq:bt1} we have 
\begin{align*}
   \int_{\Omega_2} |\nabla u |^2 + |u|^2 dx & \leq  2 \int_{\Omega_2} | \nabla R(t)|^2 +|R(t)|^2 dx+ 2 \left\| u(t) - R(t) \right\|_{H^1_0} \\
   &\leq C \int_{\Omega_2} | \nabla R(t)|^2 +|R(t)|^2   dx  + \alpha_0^2 e^{-2 \sigma_0 t}.  
\end{align*}
Using the definitions of $\Psi_k$, $\lambda_k$ and $ \sigma_0,$ we have 
\begin{align*}
  \int_{\Omega_2} | \nabla R(t)|^2 +|R(t)|^2   dx \leq C e^{-8\sqrt{\sigma_0} (\sqrt{\sigma_0} t - \Lambda)} \leq   C e^{-4 \sigma_0 t }   
\end{align*}
Taking $T_0$ and $\Lambda$ so that $\sqrt{\sigma_0} T_0 \geq 2 \Lambda.$ \\ 

Using \eqref{eq:dtuloc}, \eqref{eq:dtP1loc} and \eqref{eq:dtPjloc}, with definition of $\mathcal{M}_k(t)$ and $\mathcal{P}_k(t)$ we obtain 

\begin{align}
\left|  \frac{d}{dt} \mathcal{M}_k(t) \right|  + \left|  \frac{d}{dt} \mathcal{P}_k(t) \right| \leq C \frac{\alpha_0^2}{\Lambda} e^{-2 \sigma_0 t }.
\end{align}
Integrating between $t$ and $T_n$ we obtain the desired results.

\end{proof}

\begin{lem}
\label{lem:omega_k}
    For any $t \in [\bar{t},T_n],$ we have
    \begin{align} \label{eq:omega_k}
       \left| \tomega_k(t) - \omega_k \right| \leq C \left\| h \right\|_{H^1_0}^2 + C (\frac{\alpha_0^2}{\Lambda} + 1) e^{-2 \sigma_0 t }  
    \end{align}
\end{lem}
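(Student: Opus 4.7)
The strategy is to convert the near-conservation of the localized mass $\mathcal{M}_k(t)$ from Lemma \ref{lem:localMP} into the desired bound on $\tomega_k(t)-\omega_k$, by reading the leading behavior of $\mathcal{M}_k(t)$ off the modulated decomposition $u=\TR+h$. Expanding,
\[
\mathcal{M}_k(t)=\int|\TR_k|^2\varphi_k\,dx+2\re\!\int\TR_k\,\bar h\,\varphi_k\,dx+\int|h|^2\varphi_k\,dx+\mathcal{E}(t),
\]
where $\mathcal{E}(t)$ gathers all cross terms involving $\TR_j$ with $j\ne k$. The three building blocks are then simplified using (i) the spatial separation of the different solitons, (ii) the exponential decay of $Q_{\tomega_k}$, and (iii) the orthogonality relations coming from the modulation.

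The cut-off $\varphi_k(t,\cdot)$ is supported in an $x_1$-strip centered near $v_{k,1}t$, while $\TR_j$ concentrates near $v_{j,1}t$; the choice of $\sigma_0$ in \eqref{eq:defSig0} together with the condition $\sqrt{\sigma_0}\,T_0 \gg \Lambda$ already used in Lemma~\ref{lem:localMP} forces the distance between $\mathrm{supp}\,\TR_j$ and $\mathrm{supp}\,\varphi_k$ to grow linearly at rate $\gtrsim 8\sqrt{\sigma_0}\,t$, so $\mathcal{E}(t)=O(e^{-ct})$ for some $c\gg 2\sigma_0$. By the same separation $\TR_k(1-\varphi_k)$ is exponentially small, and $\Psi_k\equiv 1$ on the essential support of $\TR_k$ for $t$ large; hence $\int|\TR_k|^2\varphi_k\,dx=\int Q_{\tomega_k(t)}^2\,dx+O(e^{-ct})$. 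For the linear-in-$h$ term, the orthogonality $\re\int\TR_k\bar h\,dx=0$ from Lemma~\ref{lem:modu-static} lets us replace $\varphi_k$ by $\varphi_k-1$, which is supported away from $\TR_k$, yielding $|2\re\int\TR_k\bar h\,\varphi_k\,dx|\le Ce^{-2\sigma_0 t}\|h\|_{H^1_0}$. Combined:
\[
\mathcal{M}_k(t)=\int Q_{\tomega_k(t)}^2\,dx+O\bigl(\|h(t)\|_{H^1_0}^2\bigr)+O\bigl(e^{-2\sigma_0 t}\|h(t)\|_{H^1_0}\bigr)+O(e^{-ct}).
\]

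Evaluating the same identity at $t=T_n$, where $h(T_n)\equiv 0$ and $\tomega_k(T_n)=\omega_k$, gives $\mathcal{M}_k(T_n)=\int Q_{\omega_k}^2\,dx+O(e^{-cT_n})$. Subtracting, invoking Lemma~\ref{lem:localMP}, and absorbing the mixed term with Young's inequality, we obtain
\[
\Big|\!\int Q_{\tomega_k(t)}^2\,dx-\int Q_{\omega_k}^2\,dx\Big|\le C\|h(t)\|_{H^1_0}^2+C\Bigl(\frac{\alpha_0^2}{\Lambda}+1\Bigr)e^{-2\sigma_0 t}.
\]
Since the map $\omega\mapsto\int Q_\omega^2\,dx$ is smooth with strictly positive derivative at $\omega_k$ (precisely the constant $a_k$ appearing in the Jacobian computation of Lemma~\ref{lem:modu-static}), and since $|\tomega_k(t)-\omega_k|\le C\alpha_0 e^{-\sigma_0 t}\ll 1$ by \eqref{eq:mod-para}, a first-order Taylor expansion with remainder absorbable into $\|h\|_{H^1_0}^2$ inverts the relation and yields \eqref{eq:omega_k}. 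The main technical difficulty is the careful bookkeeping of the several exponentially small cross terms to guarantee that each decays at least like $e^{-2\sigma_0 t}$; this is exactly where the quantitative balance $\sqrt{\sigma_0}\,T_0\gg\Lambda$ is used. The structural key is the orthogonality $\re\int\TR_k\bar h\,dx=0$, which suppresses the first-order-in-$h$ contribution and ensures that the right-hand side of \eqref{eq:omega_k} is quadratic in $\|h\|_{H^1_0}$.
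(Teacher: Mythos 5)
Your proposal is correct and follows essentially the same route as the paper: expand $\mathcal{M}_k(t)$ via $u=\TR+h$, discard cross terms and the $\Psi_k^2-1$, $\varphi_k-1$ contributions by exponential separation, kill the linear-in-$h$ term by the orthogonality condition, compare with $t=T_n$ through Lemma \ref{lem:localMP}, and invert the strictly monotone map $\omega\mapsto\int Q_\omega^2$ (the paper uses the explicit scaling identity $\int Q_\omega^2=\omega^{\frac{2}{p-1}-\frac d2}\int Q^2$, which is the same monotonicity you invoke). One cosmetic point: the superlinear Taylor remainder $O(|\tomega_k(t)-\omega_k|^{1+\varepsilon})$ should be absorbed into the linear term on the left-hand side using the smallness of $|\tomega_k(t)-\omega_k|$ from \eqref{eq:mod-para}, rather than into $\|h\|_{H^1_0}^2$, since $|\tomega_k-\omega_k|$ is not controlled by $\|h\|$.
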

\begin{proof}
Expanding $u(t)=\TR(t) + h(t),$ using the properties of the support of $\varphi_k$ and $\Psi_k$
with the orthogonality conditions \eqref{eq:orthoC2}, \eqref{eq:orthoC3}, we obtain 
\begin{align*}
 \mathcal{M}_k(t)= \int |u(t)|^2 \varphi_k(t) dx &= \int \tQ^2_{\tomega_k(t)}  + \int | h(t)|^2 \varphi_k(t) dx+\int \tQ^2_{\tomega_k(t)} ( \Psi_k^2-1) + O(e^{-2 \sigma_0 t} ) \\
=& \int \tQ^2_{\tomega_k(t)}  + \int | h(t)|^2 \varphi_k(t) dx+ O(e^{-2 \sigma_0 t} ) 
\end{align*}
By Lemma \ref{lem:localMP}, we have 
\begin{align*}
     | \mathcal{M}_k(T_n) -\mathcal{M}_k(t ) |   \leq C\frac{\alpha_0^2}{\Lambda} e^{-2 \sigma_0 t}.
\end{align*}
Using the fact that $\tomega_k(T_n)=\omega_k$ and $h(T_n)=0,$ we have
\begin{align*}
  \left| \int Q^2_{\tomega_k(t)} - Q^2_{\omega_k}  \right| \leq C \left\| h(t)\right\|_{H^1_0} + C \left( \frac{\alpha_0^2}{\Lambda} +1 \right) e^{-2 \sigma_0 t } 
\end{align*}
Recall that we assume that $\int Q^2_{\omega} = \omega^{\frac{2}{p-1}-\frac{d}{2}} \int Q^2.$ Thus, for $\tomega_k(t)-\omega_k$ small enough we obtain 

\begin{align*}
 \int Q^2_{\tomega_k(t)} - Q^2_{\omega_k}  & = ( \tomega_k^{\frac{2}{p-1}-\frac{d}{2}}(t)   - \omega_k^{\frac{2}{p-1}-\frac{d}{2}} ) \int Q^2 \\
 &= \left( \frac{2}{p-1}-\frac{d}{2} \right) \omega_k^{\frac{2}{p-1}-\frac{d}{2}} ( \tomega_k(t) - \omega_k) \int Q^2 + O(\tomega_k(t)-\omega_k)^{1+\varepsilon},
\end{align*}
where $\varepsilon>0$ and $ \frac{2}{p-1}-\frac{d}{2} >0 .$ Using \eqref{eq:mod-para}, we obtain \eqref{eq:omega_k}. Note that for general nonlinearity $f(|u|^2 u)$, we assume that $\frac{d}{d\omega} \int Q^2_{\omega} dx \bigg|_{\omega=\omega_k}>0,$ using similar argument, we obtain the desired result. 
\end{proof}
Next, we control $\| h(t)\|_{H^1_0}$ and the modulation parameters. The estimate for $h(t)$ relies on the previous bounds, together with the almost conserved quantities $\mathcal{M}_k(t)$ and $\mathcal{P}_k(t)$, as well as the energy functional, which links the localized mass, momentum, and energy. We denote by \begin{align*}
   \mathfrak{J}(t):= \sum_{k=1}^{K} \bigg\{  (\omega_k+ \frac{|v_k|^2}{4}) \int |u(t)|^2 \varphi_k(t) - v_k \im \int \nabla u(t) \bar{u}(t) \varphi_k(t) dx     \bigg\}
\end{align*}
and we set $$ \mathcal{G}(t):=E(u(t))+ \mathfrak{J}(t).$$ 
Note that since the coefficients in $\mathfrak{J}(t)$ depend on the parameters of each solitary wave, it is necessary to localize around each of them. Let us mention that this functional is used to investigate the stability of one solitary waves with nonzero velocity and to address the multi-soliton case, where the nonzero velocity condition plays a crucial role; see \cite{MR2228459}.

\begin{lem}
For all $t \in [\bar{t},T_n],$ we have
\begin{align} \label{eq:G(t)}
  \mathcal{G}(t)&=   \sum_{k=1}^{K} \bigg( E(\tQ_{\omega_k(t)}) + \omega_k \int |\tQ_{\omega_k(t)}|^2 dx \bigg)+  H_K(h(t),h(t))  +O(e^{-2\sigma_0 t }) + \left\| h \right\|_{H^1_0 }^2 \beta\left( \left\| h \right\|_{H^1_0 } \right),
\end{align}
where $\beta(h) \to 0 ,$ as $h \to 0 ,$ and
\begin{align*}
 H_K(h(t),h(t)):&= \int | \nabla h (t) |^2 dx  - \sum_{k=1}^{K} \int  f(| \TR_k(t)|^{2}) |h(t)|^2 + 2  f^{\prime}(|\TR_k(t)|^2) (\re(\TR_k(t) \bar{h}(t)))^2 dx  \\
 &+ \sum_{k=1}^K 
    \bigg\{(\omega_k+ \frac{|v_k|^2}{4})    \int |h(t)|^2 \varphi_k(t) dx - v_k \cdot \left( \im \int \nabla h(t) \bar{h}(t) \varphi_k(t) dx \right) \bigg\}
\end{align*}
Moreover, with the orthogonality conditions for $h(t),$ that is, \eqref{eq:orthoC1}, \eqref{eq:orthoC2} and \eqref{eq:orthoC3}, we have for $\lambda>0,$ 

\begin{equation} \label{eq:coef:prop}
    H_K(h(t),h(t)) \geq \lambda \left\| h(t) \right\|_{H^1_0} . 
\end{equation}

\end{lem}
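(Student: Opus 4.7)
My plan is to prove \eqref{eq:G(t)} by substituting $u(t) = \tilde{R}(t) + h(t)$ into $\mathcal{G}(t) = E(u(t)) + \mathfrak{J}(t)$ and Taylor expanding to second order in $h$, and then to establish \eqref{eq:coef:prop} by a partition of unity followed by an inverse Galilean gauge that reduces $H_K$ to a sum of standard single-soliton quadratic forms covered by \eqref{coerv-subcrit}.

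For the zeroth-order contribution in the expansion, the exponential separation of the solitons --- guaranteed by the velocity gaps in \eqref{eq:defSig0} and by $|\alpha_k(t) + y_k(t)| \gtrsim t$ for $t$ large --- makes all cross-terms $\int \tilde{R}_j \bar{\tilde{R}}_k$ with $j \neq k$, as well as corrections from $\Psi_k^2 - 1$ and from $\varphi_k \not\equiv 1$ on the soliton support, of order $e^{-2 \sigma_0 t}$. Computing $|\nabla \tilde{R}_k|^2$ produces the Galilean kinetic piece $\tfrac{|v_k|^2}{4}\tilde{Q}_{\tilde{\omega}_k}^2$, which combines with $(\omega_k + \tfrac{|v_k|^2}{4})\int|\tilde{R}_k|^2 \varphi_k$ and $-v_k \cdot \im \int \nabla \tilde{R}_k \bar{\tilde{R}}_k \varphi_k$ from $\mathfrak{J}$ so that the $v_k$-dependent pieces collapse and leave precisely $E(\tilde{Q}_{\tilde{\omega}_k}) + \omega_k \int |\tilde{Q}_{\tilde{\omega}_k}|^2$ up to $O(e^{-2 \sigma_0 t})$. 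The first-order-in-$h$ term reconstitutes at each soliton the elliptic equation $-\Delta \tilde{Q}_{\tilde{\omega}_k} + \tilde{\omega}_k \tilde{Q}_{\tilde{\omega}_k} = f(\tilde{Q}_{\tilde{\omega}_k}^2) \tilde{Q}_{\tilde{\omega}_k}$ tested against $\bar{h}$ and therefore vanishes, modulo residues proportional to $\tilde{\omega}_k - \omega_k$ paired with $\int \tilde{Q}_{\tilde{\omega}_k} \bar{h}$; using Lemma \ref{lem:omega_k}, the orthogonalities \eqref{eq:orthoC2}--\eqref{eq:orthoC3}, and \eqref{eq:mod-para}, these residues are absorbed into $\|h\|_{H^1_0}^2 \beta(\|h\|_{H^1_0})$. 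The second-order term matches $H_K(h,h)$ by direct comparison of Hessians, and higher-order remainders are controlled via $H^1_0(\Omega) \hookrightarrow L^{p+1}(\Omega)$.

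For \eqref{eq:coef:prop}, I localize by setting $h_k := h \sqrt{\varphi_k}$; the inequalities $(\uppsi')^2 \leq C \uppsi$ and $(\uppsi'')^2 \leq C \uppsi'$ yield $\|h\|_{H^1_0}^2 = \sum_k \|h_k\|_{H^1_0}^2 + O(\Lambda^{-2} \|h\|_{H^1_0}^2)$ and an analogous decomposition of $H_K$. After applying the inverse Galilean gauge $\tilde{h}_k := e^{-i \tilde{\varphi}_k} h_k$, a direct computation shows that the velocity-dependent pieces cancel exactly among $|\nabla h_k|^2$, $(\omega_k + \tfrac{|v_k|^2}{4})|h_k|^2$, and $-v_k \cdot \im \int \nabla h_k \bar{h}_k$, reducing the $k$-th component of $H_K$ to
\[
\int |\nabla \tilde{h}_k|^2 + \omega_k |\tilde{h}_k|^2 - \bigl( f(\tilde{Q}_{\tilde{\omega}_k}^2) + 2 \tilde{Q}_{\tilde{\omega}_k}^2 f'(\tilde{Q}_{\tilde{\omega}_k}^2) \bigr)(\re \tilde{h}_k)^2 - f(\tilde{Q}_{\tilde{\omega}_k}^2)(\im \tilde{h}_k)^2
\]
modulo exponential errors. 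This is the sum of the scalar linearized operators $L_+$ and $L_-$ acting on the real and imaginary parts of $\tilde{h}_k$. The orthogonality conditions \eqref{eq:orthoC1}--\eqref{eq:orthoC3} transfer, through the gauge and the localization, to the exact orthogonalities of $\re \tilde{h}_k$ to $Q_{\tilde{\omega}_k}$ and $\nabla Q_{\tilde{\omega}_k}$ and of $\im \tilde{h}_k$ to $Q_{\tilde{\omega}_k}$, up to exponentially small defects. Applying \eqref{coerv-subcrit} to each scalar component, summing, and choosing $\Lambda$ and $T_0$ large enough to absorb the $O(\Lambda^{-2} + e^{-2 \sigma_0 t})$ errors yields \eqref{eq:coef:prop}.

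The main difficulty will be the bookkeeping in the coercivity step: quantifying the commutators generated by the partition of unity (which is precisely why the paper imposes the specific conditions on $\uppsi$) and verifying that the global orthogonality \eqref{eq:orthoC1}--\eqref{eq:orthoC3} transfers to the localized and gauged pieces modulo exponentially small defects. The Dirichlet condition on $\partial\Omega$ is benign: all operations preserve it, and for $t$ large the solitons and cut-offs are supported far from $\Theta$.
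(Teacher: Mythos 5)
Your proposal follows essentially the same route as the paper: the expansion \eqref{eq:G(t)} is obtained exactly as in the paper (substitute $u=\tilde R+h$, use the exponential decoupling of the $\tilde R_k$ and the compact supports of $\nabla\Psi_k$, $\Psi_k^2-1$, reconstitute the elliptic equation so the linear terms cancel against the $\mathfrak{J}$ contributions via the Galilean phase and the orthogonality conditions), and your coercivity argument is precisely the standard localization-plus-Galilean-gauge scheme that the paper does not reproduce but delegates to \cite{MartelMerleTai02} and \cite[Appendix B]{MR2228459}, together with the same extension-by-zero remark for the Dirichlet condition. One small imprecision: \eqref{coerv-subcrit} is the $L_+$ coercivity and requires orthogonality to both $Q_\omega$ and $\nabla Q_\omega$, whereas your gauged imaginary part $\im \tilde h_k$ is only orthogonal to $Q_{\tilde\omega_k}$ (up to exponentially small defects), so for that component you must invoke the separate standard fact that $L_-$ is nonnegative with kernel spanned by $Q_\omega$ and hence coercive under the single orthogonality $(h,Q_\omega)=0$, rather than \eqref{coerv-subcrit} itself; with that substitution the argument closes as you describe.
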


\begin{proof}

Expanding $u(t) = \TR(t) + h(t),$ we have
\begin{align*}
    E(u(t))&=E(\TR(t))- 2  \re \int ( \Delta \TR   + f(|\TR|^{2})  \TR ) \bar{h} dx + \int | \nabla h |^2 dx - \int  f(| \TR|^{2}) |h|^2 \\
    &+ 2  f^{\prime}(|\TR|^2) (\re(\TR \bar{h}))^2 dx + \left\| h \right\|_{H^1_0 }^2 \beta\left( \left\| h \right\|_{H^1_0 } \right)   
\end{align*}

Using the fact that $\TR_k$ are exponentially decaying, we have for $k\neq j,$
\begin{align}
\label{eq:R_jR_k}
  \int | \TR_j \TR_k | dx +    \int | \TR_j  \nabla \TR_k | dx +   \int | \nabla \TR_j \nabla \TR_k | dx < C e^{-2\theta_0 t }
\end{align}
Since $f(0)=0,$ and $|F(s)| < C s^2$ and $|f(s)|< C s$ in a neighborhood of zero. Therefore, we have
\begin{align*}
 E(u(t))&=\sum_{k=1}^{K} E(\TR_k(t))- 2 \sum_{k=1}^{K} \re \int ( \Delta \TR_k   + f(|\TR_k|^{2})  \TR_k ) \bar{h} dx+ O(e^{-2 \sigma_0 t }) + \int | \nabla h |^2 dx  \\ 
 &- \sum_{k=1}^{K} \int  f(| \TR_k|^{2}) |h|^2 
    + 2  f^{\prime}(|\TR_k|^2) (\re(\TR_k \bar{h}))^2 dx + \left\| h \right\|_{H^1_0 }^2 \beta\left( \left\| h \right\|_{H^1_0 } \right)       
\end{align*}

Notice that, form definition of $\TR$ we have
\begin{align*}
  \Delta \TR_k &= \Delta \tQ_{\tomega_k(t)} \Psi_k  e^{i \tvarphi_k(t,x) }+ i v_k \nabla  \tQ_{\tomega_k(t)} \Psi_ke^{i \tvarphi_k(t,x) }  - \frac{|v_k|^2}{4}  \tQ_{\tomega_k(t)} \Psi_k  e^{i \tvarphi_k(t,x) } \\
  &+  2 \nabla \tQ_{\tomega_k(t)} \nabla \Psi_k  e^{i \tvarphi_k(t,x) } +  2  \tQ_{\tomega_k(t)} \Delta \Psi_k  e^{i \tvarphi_k(t,x) } +  \tQ_{\tomega_k(t)} v_k \nabla \Psi_k  e^{i \tvarphi_k(t,x) } 
\end{align*}
and \begin{align*}
   f(|\TR_k|^{2})  \TR_k =    f(|\tQ_{\tomega_k(t)}|^{2})  \tQ_{\tomega_k(t)} \Psi_k e^{i \tvarphi_k(t,x) } +    f(|\tQ_{\tomega_k(t)}|^{2}) \tQ_{\tomega_k(t)} \Psi_k(  f(|\Psi_k^{p-1}|^{2})  -1) 
\end{align*}

Therefore, we have 
\begin{align*}
  \Delta \TR_k   +  f(|\TR_k|^{2})  \TR_k &= \tomega_k(t) \TR_k(t)  + i v_k \nabla ( \tQ_{\tomega_k(t)} \Psi_k ) e^{i \tvarphi_k(t,x) }  - \frac{|v_k|^2}{4}  \tQ_{\tomega_k(t)} \Psi_k  e^{i \tvarphi_k(t,x) } + \gamma(t,x), \\
\end{align*}
where 
\begin{align*}
    \gamma(t,x)=   &  2 \nabla \tQ_{\tomega_k(t)} \nabla \Psi_k  e^{i \tvarphi_k(t,x) } +  2  \tQ_{\tomega_k(t)} \Delta \Psi_k  e^{i \tvarphi_k(t,x) } + f(|\tQ_{\tomega_k(t)}|^{2}) \tQ_{\tomega_k(t)} \Psi_k 
  ( f(|\Psi_k|^{2})  -1) 
\end{align*}
Notice that $\gamma $ has a compact support, by definition of $\Psi.$ \\
Using the fact that $\nabla \Psi_k$ and $\Psi_k^2-1$ have compact support, we obtain
\begin{align}
\label{eq:E1}
\nonumber E(u(t))&=\sum_{k=1}^{K} \bigg( E(\tQ_{\omega_k(t)}) + \frac{|v_k|^2}{4} \int |\tQ_{\omega_k(t)}|^2 dx \bigg)- 2 \sum_{k=1}^{K} \tomega_k(t) \re \int  \TR_k(t)  \bar{h}(t) dx \\ 
 &-2  \sum_{k=1}^{K} \re \int  (i v_k \cdot \nabla ( \tQ_{\tomega_k(t)} \Psi_k )- \frac{|v_k|^2}{4}  \tQ_{\tomega_k(t)} \Psi_k   ) e^{i \tvarphi_k(t,x) }  \bar{h}(t) dx + O(e^{-2 \sigma_0 t }) \\ \nonumber
& + \int | \nabla h |^2 dx  - \sum_{k=1}^{K} \int  f(| \TR_k|^{2}) |h|^2 
    + 2  f^{\prime}(|\TR_k|^2) (\re(\TR_k \bar{h}))^2 dx + \left\| h \right\|_{H^1_0 }^2 \beta\left( \left\| h \right\|_{H^1_0 } \right)     .     
\end{align}
Next, we turn to estimate $\mathfrak{J}(t),$
\begin{align*}
   \mathfrak{J}(t)= \sum_{k=1}^{K} \bigg\{  (\omega_k+ \frac{|v_k|^2}{4}) \int |u(t)|^2 \varphi_k(t) - v_k \im \int \nabla u(t) \bar{u}(t) \varphi_k(t) dx     \bigg\}
\end{align*}
Expanding the mass term, and using the properties of $\varphi_k, $ we have
\begin{align*}
 \int |u(t)|^2 \varphi_k(t) dx &= \int |\TR(t) |^2 \varphi_k(t) dx +  2 \re \int \TR(t) \bar{h} \varphi_k(t)  dx +  \int |h(t)|^2  \varphi_k(t) dx\\
 &=  \int | \TR_k(t) |^2 dx +  2 \re \int \TR_k(t) \bar{h} \varphi_k(t)  dx +  \int |h(t)|^2 \varphi_k(t) dx   + O(e^{-2 \sigma_0 t }) \\
 & = \int | \tQ_{\tomega_k(t)} |^2 dx + \int  \tQ_{\tomega_k(t)}^2 (\Psi_k^2-1)  dx +  2 \re \int \TR_k(t) \bar{h}(t)  dx \\
 & +  \int |h(t)|^2 \varphi_k(t) dx   + O(e^{-2 \sigma_0 t }). 
\end{align*}
Since $\Psi_k^2-1$ has compact has a compact support, then we have 
\begin{align*}
 \int |u(t)|^2 \varphi_k(t) dx & = \int | \tQ_{\tomega_k(t)} |^2 dx +  2 \re \int \TR_k(t) \bar{h}(t)  dx \\
 & +  \int |h(t)|^2 \varphi_k(t) dx   + O(e^{-2 \sigma_0 t }). 
\end{align*}
Using \eqref{eq:R_jR_k}, the momentum contribution satisfies, 
\begin{align*}
  \im \int \nabla u \bar{u} \varphi_k(t) dx&= \im \int \nabla \TR(t) \overline{\TR}(t) \varphi_k(t)- \im \int \TR(t) h(t) \varphi_k^{\prime}(t) dx  -2 \im \int \nabla \TR(t) \bar{h}(t) \varphi_k(t) dx \\
  &+ \im \int \nabla h(t) \bar{h}(t) \varphi_k(t) dx \\
  &=  \im \int \nabla \TR_k(t) \overline{\TR_k} (t) - \im \int \TR(t) h(t) \varphi_k(t)dx   -2  \im \int \nabla \TR_k(t) \bar{h}(t) dx
   \\
   &+ \im \int \nabla h(t) \bar{h}(t) \varphi_k(t) dx\\
   &= \frac{v_k}{2} \int |\tQ_{\omega_k(t)}|^2 dx  +O(e^{-2\sigma_0 t }) -2  \im \int \nabla \TR_k(t) \bar{h}(t) dx+ \im \int \nabla h(t) \bar{h}(t) \varphi_k(t) dx.
\end{align*}
Therefore, we obtain
\begin{align}
\label{eq:J2} 
\mathfrak{J}(t)&= \sum_{k=1}^K \bigg\{ (\omega_k+ \frac{|v_k|^2}{4}) \left(  \int | \tQ_{\tomega_k(t)} |^2 dx  + 2   \re \int \TR_k(t) \bar{h}(t)  dx +  \int |h(t)|^2 \varphi_k(t) dx     \right) \\ \nonumber
&- \frac{|v_k|^2}{2}   \int |\tQ_{\omega_k(t)}|^2 dx  + 2  v_k  \im \int \nabla \TR_k(t) \bar{h}(t) dx
   - v_k \im \int \nabla h(t) \bar{h}(t) \varphi_k(t) dx +O(e^{-2\sigma_0 t })
  \bigg\}   .
\end{align}

Using \eqref{eq:E1} and \eqref{eq:J2}, we obtain 
\begin{align*}
  \mathcal{G}(t)&=   \sum_{k=1}^{K} \bigg( E(\tQ_{\omega_k(t)}) + \omega_k \int |\tQ_{\omega_k(t)}|^2 dx \bigg)+  H_K(h(t),h(t))- 2 \sum_{k=1}^{K} \tomega_k(t) \re \int  \TR_k(t)  \bar{h}(t) dx  \\
  &-2  \sum_{k=1}^{K} \re \int  (i v_k \cdot \nabla (\tQ_{\tomega_k(t)} \Psi_k )- \frac{|v_k|^2}{4}  \tQ_{\tomega_k(t)} \Psi_k    e^{i \tvarphi_k(t,x) }  \bar{h}(t) dx\\
&+ \sum_{k=1}^K \bigg\{ (\omega_k+ \frac{|v_k|^2}{4})   2   \re \int \TR_k(t) \bar{h}(t)  dx      + 2  v_k  \im \int \nabla \TR_k(t) \bar{h}(t) dx   \bigg\}  +O(e^{-2\sigma_0 t }) + \left\| h \right\|_{H^1_0 }^2 \beta\left( \left\| h \right\|_{H^1_0 } \right)
\end{align*}
Using the orthogonality conditions \eqref{eq:orthoC2}, we have 
\begin{align*}
    \sum_{k=1}^{K}  \re \int  \TR_k(t)  \bar{h}(t) dx=  \re \int  \TR(t)  \bar{h}(t) dx=0
\end{align*}
and
\begin{align*}
&-2  \sum_{k=1}^{K} \re \int  (i v_k \cdot \nabla (\tQ_{\tomega_k(t)} \Psi_k )- \frac{|v_k|^2}{4}  \tQ_{\tomega_k(t)} \Psi_k   ) e^{i \tvarphi_k(t,x) }  \bar{h}(t) dx   +  \frac{|v_k|^2}{2}      \re \int \TR_k(t) \bar{h}(t)  dx  \\
& + 2  v_k  \im \int \nabla \TR_k(t) \bar{h}(t) dx =  -2  \sum_{k=1}^{K} \im \int   v_k \cdot \nabla (\tQ_{\tomega_k(t)} \Psi_k ) e^{i \tvarphi_k(t,x) } \bar{h}(t) dx + \sum_{k=1}^{K} \re \int  |v_k|^2 \TR_k(t)  \bar{h}(t) dx \\
  &+ 2 \sum_{k=1}^{K}   v_k \im \int \nabla (\tQ_{\tomega_k(t)} \Psi_k )  e^{i \tvarphi_k(t,x) } \bar{h}(t) dx - \sum_{k=1}^{K} \re \int |v_k|^2 \TR_k(t) \bar{h}(t) dx =0
\end{align*}

Therefore, we have 
\begin{align*}
  \mathcal{G}(t)&=   \sum_{k=1}^{K} \bigg( E(\tQ_{\omega_k(t)}) + \omega_k \int |\tQ_{\omega_k(t)}|^2 dx \bigg)+  H_K(h(t),h(t))  +O(e^{-2\sigma_0 t }) + \left\| h \right\|_{H^1_0 }^2 \beta\left( \left\| h \right\|_{H^1_0 } \right)
\end{align*}
Next, recall that from \cite[Section 2]{MR820338} we have, for $\omega >0$ close to $\omega_0>0,$ 
 \begin{align*}
  \left| E(Q_{\omega_0}) + \omega_0 \int Q_{\omega_0}^2dx - ( E(Q_{\omega}) + \omega_0 \int Q_{\omega}^2dx  \right|   \leq C \left| \omega_0 -\omega \right|^2 .
 \end{align*}
Using the above property, we obtain the desired result \eqref{eq:G(t)}. Note that the proof of \eqref{eq:coef:prop} is standard and based on Lemma \ref{lem:modu-static}, see \cite[Lemma 2.4]{Ou19}. It also requires localization arguments similar to those in \cite{MartelMerleTai02} and \cite[Appendix B]{MR2228459}, which remain valid for $h \in H^1_0(\Omega)$. Indeed, $h$ can be extended to a function in $H^1(\mathbb{R}^3)$ by setting $h(x) = 0$ for $x \in \Theta$.
\end{proof}

\begin{lem}
\label{lem:control-mod-para}
    For any $t \in [\bar{t},T_n],$ we have 
\begin{align*}
\left\| h(t)\right\|_{H^1_0}^2 + | \tomega_k(t) - \omega_k| + |y_k(t)|^2 + | \tmu_k(t) - \mu_k|^2 \leq C \left( \frac{\alpha_0^2}{\Lambda}  + 1\right) e^{-2 \sigma_0 t }.        
\end{align*}
\end{lem}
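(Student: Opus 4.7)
The plan is to use the almost-conserved Lyapunov functional $\mathcal{G}(t)=E(u(t))+\mathfrak{J}(t)$ to control $\|h\|_{H^1_0}$, then to recover the translation and phase parameters by integrating the ODE bounds \eqref{eq:der-mod-para} from the terminal time $T_n$, using the initial conditions $h(T_n)\equiv 0$, $y_k(T_n)=0$, $\tomega_k(T_n)=\omega_k$, $\tmu_k(T_n)=\mu_k$.

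First, since $E(u(t))$ is exactly conserved and Lemma~\ref{lem:localMP} ensures $|\mathcal{M}_k(T_n)-\mathcal{M}_k(t)|+|\mathcal{P}_k(T_n)-\mathcal{P}_k(t)|\leq C\alpha_0^2\Lambda^{-1}e^{-2\sigma_0 t}$, the definition of $\mathfrak{J}$ yields $|\mathcal{G}(t)-\mathcal{G}(T_n)|\leq C\alpha_0^2\Lambda^{-1}e^{-2\sigma_0 t}$. Applying \eqref{eq:G(t)} at $t=T_n$ and at a general $t\in[\bar t,T_n]$, and subtracting, I obtain
\[
\sum_{k=1}^{K}\!\bigl[E(\tQ_{\tomega_k(t)})+\omega_k\|\tQ_{\tomega_k(t)}\|_{L^2}^2-E(\tQ_{\omega_k})-\omega_k\|\tQ_{\omega_k}\|_{L^2}^2\bigr]+H_K(h,h)+\|h\|_{H^1_0}^2\beta(\|h\|_{H^1_0})=O\!\bigl((\tfrac{\alpha_0^2}{\Lambda}+1)e^{-2\sigma_0 t}\bigr).
\]
The bracketed sum is $O(|\tomega_k(t)-\omega_k|^2)$ by the quadratic expansion cited at the end of the previous lemma's proof, since $Q_{\omega_k}$ is a critical point of $E(\cdot)+\omega_k M(\cdot)$. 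Combining the coercivity $H_K(h,h)\geq \lambda\|h\|_{H^1_0}^2$ from \eqref{eq:coef:prop} with Lemma~\ref{lem:omega_k} to bound $|\tomega_k-\omega_k|^2\leq C\|h\|_{H^1_0}^4+C(\tfrac{\alpha_0^2}{\Lambda}+1)^2 e^{-4\sigma_0 t}$, and choosing $T_0$ large so that $\|h\|_{H^1_0}$ is small via \eqref{eq:mod-para} and \eqref{eq:bt1}, the quartic and $\beta$-remainder are absorbed into the coercive term. This yields $\|h(t)\|_{H^1_0}^2\leq C(\tfrac{\alpha_0^2}{\Lambda}+1)e^{-2\sigma_0 t}$, and substituting back into Lemma~\ref{lem:omega_k} gives the same bound for $|\tomega_k(t)-\omega_k|$.

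Finally, the translation and phase parameters are recovered from the ODE estimates \eqref{eq:der-mod-para}: using the just-proved bounds on $\|h\|_{H^1_0}$ and $|\tomega_k-\omega_k|$, one gets $|dy_k/ds|+|d\tmu_k/ds|\leq C(\tfrac{\alpha_0^2}{\Lambda}+1)^{1/2}e^{-\sigma_0 s}$. Integrating from $t$ up to $T_n$ with the terminal data $y_k(T_n)=0$ and $\tmu_k(T_n)=\mu_k$ produces $|y_k(t)|+|\tmu_k(t)-\mu_k|\leq C(\tfrac{\alpha_0^2}{\Lambda}+1)^{1/2}e^{-\sigma_0 t}$, which squares to the desired bound. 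The main obstacle is the absorption step: one must verify that the $O(|\tomega_k-\omega_k|^2)$ term and the $\|h\|_{H^1_0}^2\beta(\|h\|_{H^1_0})$ remainder are strictly subdominant to $\lambda\|h\|_{H^1_0}^2$, which requires $T_0$ sufficiently large to exploit the smallness from \eqref{eq:bt1}, and relies on the fact that the coercivity \eqref{eq:coef:prop} remains valid in the exterior domain $\Omega$ via the extension-by-zero of $h\in H^1_0(\Omega)$.
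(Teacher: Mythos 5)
Your proposal is correct and follows essentially the same route as the paper: almost-conservation of $\mathcal{G}(t)=E(u(t))+\mathfrak{J}(t)$ via Lemma~\ref{lem:localMP} and energy conservation, the expansion \eqref{eq:G(t)} with the terminal data $h(T_n)=0$, $\tomega_k(T_n)=\omega_k$, the coercivity \eqref{eq:coef:prop} with absorption of the small remainders, Lemma~\ref{lem:omega_k} for $\tomega_k-\omega_k$, and integration of \eqref{eq:der-mod-para} backwards from $T_n$ for $y_k$ and $\tmu_k$. Your explicit treatment of the $O(|\tomega_k(t)-\omega_k|^2)$ term coming from the difference $E(\tQ_{\tomega_k(t)})+\omega_k\|\tQ_{\tomega_k(t)}\|_{L^2}^2-E(Q_{\omega_k})-\omega_k\|Q_{\omega_k}\|_{L^2}^2$, absorbed via Lemma~\ref{lem:omega_k} for $T_0$ large, is a slightly more careful rendering of a step the paper leaves implicit, but it is the same argument.
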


\begin{proof}
In the case $k=1,$  one can use similar arguments as for the construction of a solitary wave solution for the \NNls equation, see \cite{Ou19}. Notice that by Lemma \ref{lem:localMP}, we have for $t \in [\bar{t},T_n],$ 
\begin{align*}
  \left| \mathfrak{J}(t) - \mathfrak{J}(T_n) \right| \leq C \frac{\alpha_0^2}{\Lambda} e^{-2 \sigma_0 t}.
\end{align*}
From conservation of the energy $E(u(t))$ and the above estimate, we have
\begin{align*}
   \mathcal{G}(t) \leq \mathcal{G}(T_n) + C \frac{\alpha_0^2}{\Lambda} e^{-2 \sigma_0 t }
\end{align*}
Using the fact that $\tomega_k(T_n)=\omega_k$ and $h(T_n)=0,$ we obtain 
\begin{align*}
  H_K(h(t),h(t)) \leq C   \left\| h \right\|_{H^1_0 }^2 \beta\left( \left\| h \right\|_{H^1_0 } \right) + C \frac{\alpha_0^2}{\Lambda} e^{-2 \sigma_0 t}.
\end{align*}
By the coercivity property \eqref{eq:coef:prop}, we obtain 
\begin{align*}
   \lambda \left\| h(t) \right\|_{H^1_0}^2 \leq  C   \left\| h \right\|_{H^1_0 }^2 \beta\left( \left\| h \right\|_{H^1_0 } \right) + C \frac{\alpha_0^2}{\Lambda} e^{-2 \sigma_0 t},
\end{align*}
Note that by \eqref{eq:mod-para}, we have  $\left\| h \right\|_{H^1_0}$ is small enough provided $\alpha_0 e^{-\sigma_0 T_0}$ small enough. 
Then, it follows that 
\begin{align*}
   \lambda \left\| h(t) \right\|_{H^1_0}^2 \leq   C \frac{\alpha_0^2}{\Lambda} e^{-2 \sigma_0 t},  
\end{align*}
where $C$ is independent of $\alpha_0.$ \\

Next, by Lemma \ref{lem:omega_k} and the above estimate we have
\begin{align*}
         \left| \tomega_k(t) - \omega_k \right| \leq  C (\frac{\alpha_0^2}{\Lambda} + 1) e^{-2 \sigma_0 t }  
\end{align*}
Finally, we use \eqref{eq:der-mod-para} to control the rest of the modulation parameters 
\begin{align*}
\left|  \frac{dy_k(t)}{dt}   \right|+ \left| \frac{d \tmu_k(t)}{dt} \right| 
\leq C_1 \left\| h(t) \right\|_{H^1_0} +  C e^{- \sigma_0 t}+ |\tomega_k(t)-\omega_k| \leq C \sqrt{ (\frac{\alpha_0^2}{\Lambda} + 1) } e^{- \sigma_0 t} 
\end{align*}
Integrating between $T_n$ and $t \in [\bar{t},T_n)$, and using the fact that $y_k(T_n)=0$ and $\tmu_k(T_n)=\mu_k,$ we obtain 
\begin{align*}
   \left|  y_k(t)  \right|^2 + \left| \tmu_k(t)\right|^2
\leq  C  (\frac{\alpha_0^2}{\Lambda} + 1) e^{-2 \sigma_0 t}  
\end{align*}
This concludes the proof of Lemma \ref{lem:control-mod-para}.

\end{proof}

\subsection{Proof of Proposition \ref{prop:bt}} Finally, we conclude the proof of Proposition \ref{prop:bt} using the estimates the from Lemma \ref{lem:control-mod-para}. For $ t \in [\bar{t},T_n]$, we have 
\begin{align*}
  \left\| R(t) - \TR(t) \right\|_{H^1_0}^2 \leq C \sum_{k=1}^K \left( |y_k(t)|^2 + |\tomega_k(t) - \omega_k|^2 + |\tmu_k - \mu_k|^2 \right)  \leq C ( \frac{\alpha_0^2}{\Lambda}+ 1 ) e^{-2 \alpha_0 t},
\end{align*}
and thus 
\begin{align*}
  \left\| u(t)- R(t) \right\|_{H^1_0}^2 \leq 2 \left\| h(t) \right\|_{H^1_0}^2 + 2 \left\| \TR(t) - R(t) \right\|_{H^1_0}^2  \leq C ( \frac{\alpha_0^2}{\Lambda}+ 1 ) e^{-2 \alpha_0 t},
\end{align*}
where $C$ is a constant that does not depend on $\alpha_0.$ Choosing $\alpha_0^2> 32C$ and $\Lambda=\alpha_0^2,$ we obtain for $T_0 $ large enough 
\begin{align*}
  \left\| u(t)- R(t) \right\|_{H^1_0}^2 \leq \frac{\alpha_0^2}{16}  e^{-2 \sigma_0 t}.
\end{align*}
Then, for all $t \in [\bar{t},T_n], $ we have $\left\| u(t)- R(t) \right\|_{H^1_0} \leq \frac{\alpha_0}{4}  e^{- \sigma_0 t}.$ This completes the proof of Proposition \ref{prop:bt}.

\bibliographystyle{acm}
\bibliography{references.bib}

\begin{thebibliography}{10}

\bibitem{AnRa08}
{\sc Anton, R.}
\newblock Global existence for defocusing cubic {NLS} and {G}ross-{P}itaevskii
  equations in three dimensional exterior domains.
\newblock {\em J. Math. Pures Appl. (9) 89}, 4 (2008), 335--354.

\bibitem{BeLi83a}
{\sc Berestycki, H., and Lions, P.-L.}
\newblock Nonlinear scalar field equations. {I}. {E}xistence of a ground state.
\newblock {\em Arch. Rational Mech. Anal. 82}, 4 (1983), 313--345.

\bibitem{BlairSmithSogge2012}
{\sc Blair, M.~D., Smith, H.~F., and Sogge, C.~D.}
\newblock Strichartz estimates and the nonlinear {S}chr\"{o}dinger equation on
  manifolds with boundary.
\newblock {\em Math. Ann. 354}, 4 (2012), 1397--1430.

\bibitem{BuGeTz04a}
{\sc Burq, N., G{\'e}rard, P., and Tzvetkov, N.}
\newblock On nonlinear {S}chr\"odinger equations in exterior domains.
\newblock {\em Ann. Inst. H. Poincar\'e Anal. Non Lin\'eaire 21}, 3 (2004),
  295--318.

\bibitem{CaLi82}
{\sc Cazenave, T., and Lions, P.-L.}
\newblock Orbital stability of standing waves for some nonlinear
  {S}chr\"odinger equations.
\newblock {\em Comm. Math. Phys. 85}, 4 (1982), 549--561.

\bibitem{MR3124722}
{\sc Combet, V.}
\newblock Multi-existence of multi-solitons for the supercritical nonlinear
  {S}chr\"{o}dinger equation in one dimension.
\newblock {\em Discrete Contin. Dyn. Syst. 34}, 5 (2014), 1961--1993.

\bibitem{MR2815738}
{\sc C\^ote, R., Martel, Y., and Merle, F.}
\newblock Construction of multi-soliton solutions for the {$L^2$}-supercritical
  g{K}d{V} and {NLS} equations.
\newblock {\em Rev. Mat. Iberoam. 27}, 1 (2011), 273--302.

\bibitem{ThomasOSvetlana22}
{\sc Duyckaerts, T., Landoulsi, O., and Roudenko, S.}
\newblock Threshold solutions in the focusing 3{D} cubic {NLS} equation outside
  a strictly convex obstacle.
\newblock {\em J. Funct. Anal. 282}, 5 (2022), Paper No. 109326, 55.

\bibitem{ThomasYang24}
{\sc Duyckaerts, T., and Yang, J.~U.}
\newblock Dispersive estimates for wave and {S}chr\" odinger equations with a
  potential in non-trapping exterior domains.
\newblock {\em Preprint arXiv:2401.12608\/} (2024).

\bibitem{GidasNirenberg79}
{\sc Gidas, B., Ni, W.~M., and Nirenberg, L.}
\newblock Symmetry and related properties via the maximum principle.
\newblock {\em Comm. Math. Phys. 68}, 3 (1979), 209--243.

\bibitem{Ivanovici07}
{\sc Ivanovici, O.}
\newblock Precised smoothing effect in the exterior of balls.
\newblock {\em Asymptot. Anal. 53}, 4 (2007), 189--208.

\bibitem{Ivanovici10}
{\sc Ivanovici, O.}
\newblock On the {S}chr{\"o}dinger equation outside strictly convex obstacles.
\newblock {\em Analysis \& PDE 3}, 3 (2010), 261--293.

\bibitem{MR2683754}
{\sc Ivanovici, O., and Planchon, F.}
\newblock On the energy critical {S}chr\"odinger equation in {$3D$}
  non-trapping domains.
\newblock {\em Ann. Inst. H. Poincar\'e Anal. Non Lin\'eaire 27}, 5 (2010),
  1153--1177.

\bibitem{killip2015riesz}
{\sc Killip, R., Visan, M., and Zhang, X.}
\newblock Riesz transforms outside a convex obstacle.
\newblock {\em International Mathematics Research Notices 2016}, 19 (2015),
  5875--5921.

\bibitem{Kwong89}
{\sc Kwong, M.~K.}
\newblock Uniqueness of positive solutions of {$\Delta u-u+u^p=0$} in {${\bf
  R}^n$}.
\newblock {\em Arch. Rational Mech. Anal. 105}, 3 (1989), 243--266.

\bibitem{Ou19}
{\sc Landoulsi, O.}
\newblock Construction of a solitary wave solution of the nonlinear focusing
  {S}chr\"{o}dinger equation outside a strictly convex obstacle in the
  {$L^2$}-supercritical case.
\newblock {\em Discrete Contin. Dyn. Syst. A 41}, 2 (2021), 701--746.

\bibitem{OL22-blow-up}
{\sc Landoulsi, O.}
\newblock On blow-up solutions to the nonlinear {S}chr\"{o}dinger equation in
  the exterior of a convex obstacle.
\newblock {\em Dyn. Partial Differ. Equ. 19}, 1 (2022), 1--22.

\bibitem{OsvetlanaKai23}
{\sc Landoulsi, O., Roudenko, S., and Yang, K.}
\newblock Interaction with an obstacle in the 2{D} focusing nonlinear
  {S}chr\"{o}dinger equation.
\newblock {\em Adv. Comput. Math. 49}, 5 (2023), Paper No. 71, 44.

\bibitem{DongSmithZhang2012}
{\sc {Li}, D., {Smith}, H., and {Zhang}, X.}
\newblock {Global well-posedness and scattering for defocusing energy-critical
  NLS in the exterior of balls with radial data.}
\newblock {\em {Math. Res. Lett.} 19}, 1 (2012), 213--232.

\bibitem{Maris02}
{\sc Mari\c~s, M.}
\newblock Existence of nonstationary bubbles in higher dimensions.
\newblock {\em J. Math. Pures Appl. (9) 81}, 12 (2002), 1207--1239.

\bibitem{MartelFrank01}
{\sc Martel, Y., and Merle, F.}
\newblock Instability of solitons for the critical generalized {K}orteweg-de
  {V}ries equation.
\newblock {\em Geom. Funct. Anal. 11}, 1 (2001), 74--123.

\bibitem{MartelMerle06}
{\sc Martel, Y., and Merle, F.}
\newblock Multi solitary waves for nonlinear {S}chr\"odinger equations.
\newblock {\em Ann. Inst. H. Poincar\'e{} C Anal. Non Lin\'eaire 23}, 6 (2006),
  849--864.

\bibitem{MR2271697}
{\sc Martel, Y., and Merle, F.}
\newblock Multi solitary waves for nonlinear {S}chr\"odinger equations.
\newblock {\em Ann. Inst. H. Poincar\'e Anal. Non Lin\'eaire 23}, 6 (2006),
  849--864.

\bibitem{MartelMerleTai02}
{\sc Martel, Y., Merle, F., and Tsai, T.-P.}
\newblock Stability and asymptotic stability in the energy space of the sum of
  {$N$} solitons for subcritical g{K}d{V} equations.
\newblock {\em Comm. Math. Phys. 231}, 2 (2002), 347--373.

\bibitem{MR2228459}
{\sc Martel, Y., Merle, F., and Tsai, T.-P.}
\newblock Stability in {$H^1$} of the sum of {$K$} solitary waves for some
  nonlinear {S}chr\"odinger equations.
\newblock {\em Duke Math. J. 133}, 3 (2006), 405--466.

\bibitem{McLeod93}
{\sc McLeod, K.}
\newblock Uniqueness of positive radial solutions of {$\Delta u+f(u)=0$} in
  {${\bf R}^n$}. {II}.
\newblock {\em Trans. Amer. Math. Soc. 339}, 2 (1993), 495--505.

\bibitem{Morawetz61}
{\sc Morawetz, C.~S.}
\newblock The decay of solutions of the exterior initial-boundary value problem
  for the wave equation.
\newblock {\em Comm. Pure Appl. Math. 14\/} (1961), 561--568.

\bibitem{MoraRalstonStraussCorec78}
{\sc Morawetz, C.~S., Ralston, J.~V., and Strauss, W.~A.}
\newblock Correction to: ``{D}ecay of solutions of the wave equation outside
  nontrapping obstacles'' ({C}omm. {P}ure {A}ppl. {M}ath. {\bf 30} (1977), no.
  4, 447--508).
\newblock {\em Comm. Pure Appl. Math. 31}, 6 (1978), 795.

\bibitem{PlVe09}
{\sc Planchon, F., and Vega, L.}
\newblock Bilinear virial identities and applications.
\newblock {\em Ann. Sci. \'Ec. Norm. Sup\'er. (4) 42}, 2 (2009), 261--290.

\bibitem{MR820338}
{\sc Weinstein, M.~I.}
\newblock Lyapunov stability of ground states of nonlinear dispersive evolution
  equations.
\newblock {\em Comm. Pure Appl. Math. 39}, 1 (1986), 51--67.

\bibitem{Wilcox59}
{\sc Wilcox, C.~H.}
\newblock Spherical means and radiation conditions.
\newblock {\em Arch. Rational Mech. Anal. 3\/} (1959), 133--148.

\end{thebibliography}

\end{document}